\newcommand{\displaybump}{\hbox to \@totalleftmargin{\hfil}}
\setlist[enumerate]{leftmargin=1cm}  
\DeclareMathOperator\Aut{Aut}
\DeclareMathOperator{\Sym}{Sym}
\DeclareMathOperator{\id}{id}
\DeclareMathOperator{\Type}{Type}
\DeclareMathOperator\bbN{\mathbb{N}}
\DeclareMathOperator\bbZ{\mathbb{Z}}
\DeclareMathOperator\calC{\mathcal{C}}
\DeclareMathOperator\calL{\mathcal{L}}
\theoremstyle{definition}
\newtheorem{theorem}{Theorem}[section]
\newtheorem*{theorem*}{Theorem}
\newtheorem{lemma}[theorem]{Lemma}
\newtheorem*{lemma*}{Lemma}
\newtheorem{definition}[theorem]{Definition}
\newtheorem*{definition*}{Definition}
\newtheorem*{remark*}{Remark}
\newtheorem{proposition}[theorem]{Proposition}
\newtheorem*{proposition*}{Proposition}
\newtheorem{example}[theorem]{Example}
\newtheorem*{example*}{Example}
\newtheorem*{sketch of proof}{Sketch of Proof}
\newtheorem*{idea of proof}{Idea of Proof}
\newtheorem*{rep@theorem}{\rep@title}
\newcommand{\newreptheorem}[2]{%
\newenvironment{rep#1}[1]{%
 \def\rep@title{#2 \scshape \ref{##1}}%
 \begin{rep@theorem}}%
 {\end{rep@theorem}}}
\title[]{Discrete (P)-closed groups acting on trees}
\author{Marcus Chijoff and Stephan Tornier}
\date{\today}
\begin{document}

\begin{abstract}
Reid--Smith parametrised all closed subgroups of automorphism groups of trees with Tits' independence property ($P$) using graph-based combinatorial structures known as local action diagrams. Properties of the group, such as being locally compact, compactly generated or simple, are reflected in its local action diagram. In this article we provide necessary and sufficient conditions on the local action diagram for the associated group to be discrete. 
\end{abstract}

\maketitle

\section*{Introduction}

In the general theory of totally disconnected, locally compact (t.d.l.c.) groups, groups acting on trees play an important role for theoretical and practical reasons.

On the theoretical side, due to the Cayley-Abels graph construction \cite{KM08}, every compactly generated t.d.l.c. group acts vertex-transitively on a connected regular graph $\Gamma$ of finite degree. The action of $G$ on $\Gamma$ lifts to an action on its universal cover, which is a regular tree. Note that when $G$ is discrete, compact generation amounts to finite generation and $\Gamma$ is a classical Cayley graph of $G$.

On the practical side, groups acting on trees constitute a particularly accessible class of t.d.l.c. groups which makes them a popular testing ground for conjectures about general t.d.l.c. groups and the construction of (counter)examples.

Many useful classes of groups acting on trees have been defined and parametrised. This includes Burger--Mozes' \cite{BM00} universal groups $\mathrm{U}(F)$, classifying locally transitive ($P$)-closed groups that contain an edge inversion, Smith's \cite{Smi17} universal groups $\mathrm{U}(F_{1},F_{2})$, classifying locally transitive ($P$)-closed groups preserving the bipartition of a biregular tree, Tornier's \cite{Tor23} generalised universal groups $\mathrm{U}_{k}(F)$, classifying locally transitive ($P_{k}$)-closed ($k\in\mathbb{N}$) groups that contain an involutive edge inversion, as well as Radu's \cite{Rad17} groups, classifying locally alternating or symmetric groups that act boundary-$2$-transitively on sufficiently thick trees. 

In striking recent work, Reid-Smith \cite{RS26} achieved a parametrisation of general ($P$)-closed groups acting on trees in terms of \emph{local action diagrams}, see Section~\ref{sec:local_action_diagrams}.

\begin{repdefinition}{def:local_action_diagram}[{\cite[Definition 3.1]{RS26}}]
A \textbf{local action diagram} $\Delta$ is a triple $(\Gamma, (X_{a})_{a \in A\Gamma}, (G(v))_{v \in V\Gamma})$ consisting of
\begin{enumerate}[(i)]
	\item a connected graph $\Gamma=(V\Gamma,A\Gamma,o,t,r)$,
	\item pairwise disjoint, non-empty sets $X_{a}$ ($a\in A\Gamma$), and
	\item closed subgroups $G(v)\le\Sym(X_{v})$ ($v\in V\Gamma$), where $\smash{X_{v}:=\bigsqcup_{a\in o^{-1}(v)}X_{a}}$, such that the sets $X_{a}$ ($\smash{a\in o^{-1}(v)}$) are precisely the orbits of $G(v)$.
\end{enumerate}
Each $X_{a}$ is a \textbf{colour set}, its elements are \textbf{colours}, and each $G(v)$ is a \textbf{local action}.
\end{repdefinition}

Reid--Smith introduce isomorphisms for local action diagrams and establish a one-to-one correspondence of their isomorphism classes with isomorphism classes of actions $(T,G)$, where $T$ is a tree and $G\le\Aut(T)$ is ($P$)-closed. Given any group $G\le\Aut(T)$, a local action diagram arises by labelling $\Gamma:=G\backslash T$ with data coming from the group action. Conversely, for every local action diagram $\Delta$ there is an associated tree $\mathbf{T}$, termed $\Delta$-tree, and a ($P$)-closed group $\mathbf{U}(\mathbf{T},(G(v))_v)\le\Aut(\mathbf{T})$.

Subsequently, they describe various properties of the group $G$ in terms of its local action diagram, including local compactness, compact generation and simplicity.

In this article, we first show that the action type (see Proposition~\ref{prop:group_types}) of any group $G\le\Aut(T)$  coincides with the action type of its ($P$)-closure and can be determined from its local action diagram in terms of certain partial orientations. This statement is included in \cite[Section 5]{RS26} but its proof is not made explicit.

\begin{reptheorem}{thm:lad_group_types}
Let $\Delta = (\Gamma, (G(v)), (X_{a}))$ be a local action diagram, $\mathbf{T} = (T, \pi, \calL)$ a $\Delta$-tree and $G:=\mathbf U(\mathbf{T},(G(v)))\le\Aut_{\pi}(T)$. Then $G$ is of type
\begin{description}[before={{\renewcommand\makelabel[1]{(\emph{##1})}}}]
	\item[Fixed vertex] if and only if $\Gamma$ is a tree and $\Delta$ contains a single vertex cotree. \\ The vertices of $T$ fixed by $G$ correspond to the single vertex cotrees of $\Delta$.
	\item[Inversion] if and only if $\Delta$ contains a (necessarily unique) cotree consisting of a vertex with a non-orientable loop $a\in A\Gamma$ so that $|X_{a}| = 1$. \\ The unique edge of $T$ inverted by $G$ corresponds to this cotree.
	\item[Lineal] if and only if $\Delta$ contains a cyclic cotree $\Gamma'$ with $|X_{a}| = 1$ for all $a \in A\Gamma'$.
	The two ends of $T$ fixed by $G$ correspond to the cyclic orientations of $\Gamma'$.
	\item[Focal] if and only if $\Delta$ contains a cyclic cotree $\Gamma'$ with a cyclic orientation $O'\subseteq A\Gamma'$ so that $|X_{a}| = 1$ for all $a \in O'$ but there is an $a \in A(\Gamma') \backslash O'$ with $|X_{a}| \geq 2$. The unique end of $T$ fixed by $G$ corresponds to the orientation $O'$.
	\item[Horocyclic] if and only if $\Gamma$ is a tree and $\Delta$ has a unique horocyclic end. \\ The unique end of $T$ fixed by $G$ corresponds to this horocyclic end.
	\item[General] if and only if it is of none of the types above. \\ There is a unique smallest cotree of $\Delta$ which is not of the form indicating the fixed vertex, inversion, lineal, or focal type. It corresponds to the unique minimal subtree of $T$ on which $G$ acts geometrically dense. Moreover, $\Delta$ does not have any horocyclic ends.
\end{description}
\end{reptheorem}

For each type we then characterise discreteness of the group in terms of the local actions in its local action diagram. This strengthens the group-to-diagram correspondence in the spirit of~\cite{RS26}, and allows us to identify those ($P$)-closed groups that belong in the realm of t.d.l.c. groups, rather than the discrete world.

\begin{reptheorem}{thm:discrete}
	Let $\Delta = (\Gamma, (X_{a}), (G(v)))$ be a local action diagram, $\mathbf T = (T, \pi, \calL)$ be a $\Delta$-tree, and $G:=\mathbf U(\mathbf{T},(G(v)))\le\Aut_{\pi}(T)$. If $G$ is of type
	\begin{description}[before={{\renewcommand\makelabel[1]{(\emph{##1})}}}]
		\item[Fixed vertex] then $G$ is discrete if and only if $G(v)$ is trivial for almost all $v\in V\Gamma$, and whenever $X_{v}$ ($v\in V\Gamma$) is infinite then $G(v)$ has a finite base and $G(u)$ is trivial for every $u\in V\Gamma$ such that the arc $a\in o^{-1}(v)$ oriented towards $u$ has an infinite colour set.
		\item[Inversion] then $G$ is discrete if and only if $G(v)$ is trivial for almost all $v\in V\Gamma$, and whenever $X_{v}$ ($v\in V\Gamma$) is infinite then $G(v)$ has a finite base and $G(u)$ is trivial for every $u\in V\Gamma$ such that the arc $a\in o^{-1}(v)$ oriented towards $u$ has an infinite colour set.
		\item[Lineal] then $G$ is discrete if and only if $G(v)$ is trivial for all $v\in V\Gamma$.
		\item[Focal] then $G$ is non-discrete.
		\item[Horocyclic] then $G$ is non-discrete.
		\item[General] then $G$ is discrete if and only if $G(v)$ is semiregular for all $v\in V\Gamma'$ and trivial otherwise, where $\Gamma'$ is the unique smallest cotree of $\Delta$.
	\end{description}
\end{reptheorem}

\subsection*{Acknowledgements}
Both authors owe thanks to Colin Reid for the project idea and helpful discussions, and to an anonymous referee for inspiring Proposition~\ref{prop:type_locally_determined} and pointing out several inaccuracies. The second author acknowledges financial support and hospitality as part of the focus program "Actions of totally disconnected locally compact groups on discrete structures" held at Universität Münster, Germany, where some of this work was completed. Financial support through the ARC DECRA Fellowship DE210100180 is also acknowledged.

\newpage
\section{Preliminaries}\label{sec:preliminaries}

\subsection{Permutation Groups}

Let $\Omega$ be a set. In this section we collect definitions concerning $\Sym(\Omega)$, the group of bijections from $\Omega$ to itself, called the \textbf{symmetric group} of $\Omega$. When $\Omega=\{1,2,\ldots,n\}$, the group $\Sym(\Omega)$ is also denoted by $S_{n}$.

A subgroup $G\le\Sym(\Omega)$ is a \textbf{permutation group}. For $g\in G$ and $\omega\in\Omega$ we write $g\omega$ for the image of $\omega$ under $g$. The \textbf{orbit} of $\omega$ is $G\omega:=\{g\omega\mid g\in G\}$ and the set of orbits is $G\backslash\Omega:=\{G\omega\mid \omega\in\Omega\}$. For $A\subseteq\Omega$ we define $gA:=\{ga\mid a\in A\}$ and $GA:=\{ga\mid a\in A,\ g\in G\}$. When $G$ consists only of the identity bijection we write $G=1$, irregardless of $\Omega$. The \textbf{stabiliser} of $\omega \in \Omega$ is $G_{\omega} := \{g \in G \mid g\omega = \omega\}$. The \textbf{pointwise stabiliser} of $A \subseteq \Omega$ is $G_{A} := \{g \in G \mid \forall a\in A:\ ga = a\}$. The \textbf{setwise stabiliser} of $A$ is $G_{\{A\}}:=\{g \in G \mid \forall a\in A:\ ga\in A\}$. A group $G\le\Sym(\Omega)$ is \textbf{semiregular} if $G_{\omega} = 1$ for all $\omega \in \Omega$. A subset $B \subseteq \Omega$ is a \textbf{base} of $G$ if $G_{B} = 1$.

\subsection{Permutation Topology}

Let $X$ be a set. All left translates of pointwise stabilisers of finite subsets of $X$ form the basis of a topology on $\Sym(X)$, termed the \textbf{permutation topology}. This topology turns $\Sym(X)$ into a Hausdorff, totally disconnected group, see e.g. \cite{KM08} and \cite{Woe91}. Passing to the subspace topology for subgroups of $\Sym(X)$, we in particular have the following discreteness criterion.

\begin{lemma}\label{lem:perm_top_discrete}
Let $X$ be a set and $G\le\Sym(X)$. Then $G$ is discrete if and only if there is a finite set $F\subseteq X$ such that $G_{F}$ is trivial.
\end{lemma}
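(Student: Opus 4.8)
The plan is to unwind the definition of the permutation topology and reduce discreteness of $G$ to the single statement that $\{\id\}$ is open in $G$. In any topological group the topology is translation-invariant, so $G$ is discrete precisely when the singleton $\{\id\}$ is open; hence it suffices to determine when $\{\id\}$ contains a basic open neighbourhood of $\id$ in $G$. This is essentially a matter of carefully transcribing definitions, so I expect the argument to be short.

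First I would record that the pointwise stabilisers $\Sym(X)_{F}$, with $F$ ranging over the finite subsets of $X$, form a basis of open neighbourhoods of $\id$ in $\Sym(X)$: each is a basic open set of the permutation topology (namely the left translate of $\Sym(X)_{F}$ by $\id$), and they are closed under finite intersections since $\Sym(X)_{F_{1}}\cap\Sym(X)_{F_{2}}=\Sym(X)_{F_{1}\cup F_{2}}$ with $F_{1}\cup F_{2}$ again finite. Passing to the subspace topology on $G$, it follows that the sets $G\cap\Sym(X)_{F}=G_{F}$, for $F\subseteq X$ finite, form a basis of open neighbourhoods of $\id$ in $G$. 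Now, if $G$ is discrete then $\{\id\}$ is open, hence a neighbourhood of $\id$, so it contains some basic neighbourhood $G_{F}$ with $F$ finite; since $\id\in G_{F}$ always, this forces $G_{F}=\{\id\}$, i.e.\ $G$ has a finite base. Conversely, if $G_{F}=\{\id\}$ for some finite $F\subseteq X$, then $\{\id\}$ is open in $G$, and by translation every singleton of $G$ is open, so $G$ is discrete.

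I do not anticipate a genuine obstacle; the only point demanding (minor) care is the observation that the pointwise stabilisers of finite subsets of $X$ really do form a basis of neighbourhoods of the identity and not merely a subbasis, which is exactly the intersection identity $\Sym(X)_{F_{1}}\cap\Sym(X)_{F_{2}}=\Sym(X)_{F_{1}\cup F_{2}}$ noted above. Everything else is a formal consequence of working inside a topological group.
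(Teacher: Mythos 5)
Your proof is correct and follows essentially the same route as the paper's: discreteness is equivalent to $\{\id\}$ being open, which happens exactly when some basic neighbourhood $G_{F}$ ($F$ finite) collapses to the identity. Your version merely spells out the neighbourhood-basis and translation-invariance points that the paper leaves implicit.
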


\begin{proof}
If $G$ is discrete then the singleton set $\{\id\}\subseteq G$ is open and necessarily basic. Hence there is a finite set $F\subseteq X$ so that $G_{F}=\{\id\}$. Conversely, if $G_{F}=\{\id\}$ for some finite set $F\subseteq X$ then $\{\id\}$ is open and hence $G$ is discrete.
\end{proof}

\subsection{Graphs}\label{sec:graphs}

A \textbf{graph} $\Gamma = (V, A, o, t, r)$ consists of a vertex set $V$, an arc set $A$, \textbf{origin} and \textbf{terminus} maps $o,t : A \to V$ and a \textbf{reversal} map $r : A \to A,\ a\mapsto\overline{a}$ so that $r^{2} = \mathrm{id}$ and $o(r(a)) = t(a)$ for all $a\in A$. For $a\in A$, the pair $\{a, \overline{a}\}$ is an \textbf{edge}. An arc $a \in A$ is a \textbf{loop} if $o(a) = t(a)$. Note that a loop $a\in A$ may be either \textbf{orientable}, i.e. $a\neq\overline{a}$, or \textbf{non-orientable}, i.e. $a=\overline{a}$. A graph is \textbf{simple} if it contains no loops and for every $(u,v) \in V^{2}$ there is at most one $a\in A$ such that $(u,v)=(o(a),t(a))$. In this case, an arc $a \in A$ may be referred to by $(o(a), t(a))$. 

A \textbf{subgraph} of a graph $\Gamma = (V, A, o, t, r)$ is a graph $\Gamma' = (V', A', o', t', r')$ such that $V' \subseteq V$, $A' \subseteq A$, $o = o|_{A'}$, $t = t|_{A'}$, and $r = r|_{A'}$. For a subset $V' \subseteq V$ the subgraph \textbf{induced} by $V'$ has vertex set $V'$ and arc set $\{a \in A\Gamma \mid o(a), t(a) \in V'\}$.

Let $\Gamma$ and $\Gamma'$ be graphs. A \textbf{homomorphism} $\theta$ from $\Gamma$ to $\Gamma'$ is a pair of maps $\theta_{V} : V\Gamma\to V\Gamma'$ and $\theta_{A} : A\Gamma\to A\Gamma'$ such that we have $\theta_{V}(o(a))=o(\theta_{A}(a))$ and $r(\theta_{A}(a))=\theta_{A}(r(a))$ for all $a\in A$. If $\theta_{V}$ and $\theta_{A}$ are bijective then $\theta$ is an \textbf{isomorphism}. An isomorphism from $\Gamma$ to itself is an \textbf{automorphism}. The set of all automorphisms of $\Gamma$ is denoted by $\Aut(\Gamma)$ and forms a group under composition, which we equip with the permutation topology for its action on $V$.

For a graph $\Gamma=(V,A,o,t,r)$ and a group $G\le\Aut(\Gamma)$ there is a well-defined \textbf{quotient graph} $G\backslash\Gamma\!:=\!(G\backslash V,G\backslash A,o',t',r')$ with $o'(Ga)\!:=\!Go(a)$, $t'(Ga)\!:=\!Gt(a)$ and $r'(Ga):=Gr(a)$  ($a\in A$), and quotient homomorphism $\pi=\pi_{(\Gamma,G)}:\Gamma\to G\backslash\Gamma$.

For an index set $I \subseteq \mathbb{Z}$, put $I':=\{i \in I \mid i + 1 \in I\}$. A \textbf{path} in $\Gamma$ indexed by $I$ is a sequence of vertices $(v_{i})_{i \in I}$ and edges $(\{a_{i}, \overline{a_{i}}\})_{i \in I'}$ of $\Gamma$ such that $\{a_{i}, \overline{a_{i}}\}$ is an edge between $v_{i}$ and $v_{i+1}$ for all $i \in I'$. The \textbf{length} of a path indexed by $I$ is $|I'|$. A \textbf{directed path} only includes arcs $a_{i}$ such that $o(a_{i}) = v_{i}$ and $t(a_{i}) = v_{i+1}$ for all $i \in I'$. A (directed) path is \textbf{simple} if all its vertices are distinct. A \textbf{ray} in a simple graph $\Gamma$ is a simple path indexed by $\mathbb{N}_{0}$. Two rays are equivalent if there is another ray that contains infinitely many vertices of both of them. An \textbf{end} of $\Gamma$ is an equivalence class of rays. The set of all ends of $\Gamma$ is denoted by $\partial\Gamma$. A \textbf{cycle} of length $1$ in $\Gamma$ is a vertex together with two mutually reverse loops. A cycle of length $2$ in $\Gamma$ is a pair of distinct vertices together with two pairs of mutually reverse arcs connecting them. A cycle of length $n\in\mathbb{N}_{\ge 3}$ in $\Gamma$ is a path indexed by $I = \{0, 1, \dots, n\}$ such that $v_{0} = v_{n}$ but otherwise all vertices are distinct. Given $u,v\in V\Gamma$, the \textbf{distance} $d(u, v)$ between $u$ and $v$ is the minimal length of a path between $u$ and $v$, if one exists, and infinity otherwise. For $v \in V\Gamma$ and $n \geq 1$, the \textbf{ball} $B_{n}(v)$ of radius $n$ around $v$ is the subgraph induced by $\{u\in V\Gamma\mid d(v,u)\le n\}$. A graph is \textbf{connected} if there is a path between any two distinct vertices. A \textbf{tree} is a non-empty, simple, connected graph without cycles. Given a tree $T$ and $a \in AT$ the \textbf{half-tree} $T_{a}$ is the subgraph of $T$ induced by $\{v\in VT\mid d(v,t(a)) < d(v,o(a))\}$. 

An \textbf{orientation} of a graph $\Gamma$ is a subset $O\subseteq A\Gamma$ such that $A\Gamma=O\sqcup\overline{O}$. A \textbf{partial orientation} of $\Gamma$ is a subset $O \subseteq A\Gamma$ such that $O\cap\overline{O}=\varnothing$. The distance of $v\in V\Gamma$ to a subgraph $\Gamma'$ is $d(v, \Gamma'):=\min\{d(v, v')\mid v'\in V\Gamma'\}$. An arc $a \in A\Gamma$ is \textbf{oriented towards} $\Gamma'$ if $d(o(a), \Gamma') > d(t(a), \Gamma')$. It is oriented towards an end $\xi\in\partial T$ if there is a ray $R \in \xi$ containing $a$.

\subsection{Local Action Diagrams}\label{sec:local_action_diagrams}

Property ($P$) was first introduced by Tits \cite{Tit70} to construct simple groups acting on trees. For closed actions, the following generalisation by Banks--Elder--Willis \cite{BEW15} includes Property ($P$) as the case $k=1$. 

\begin{definition}[{\cite[Definition 3.1]{BEW15}}]\label{def:pk_closure}
Let $T$ be a tree and $G\leq\Aut(T)$. The $(P_{k})$-\textbf{closure} ($k\in\bbN_{0}$) of $G$ is
\begin{displaymath}
	G^{(P_{k})} = \{h \in \Aut(T) \mid \forall v \in V(T)\ \forall X\subseteq V(B(v,k)) \text{ finite}\ \exists g \in G:\ g|_{X} = h|_{X}\}.
\end{displaymath}
If $G=G^{(P_{k})}$ then $G$ is $(P_{k})$-\textbf{closed}, or satisfies \textbf{Property} ($P_{k}$).
\end{definition}

Retain the notation of Definition~\ref{def:pk_closure}. When $T$ is locally finite, the quantification over finite sets $X\subseteq V(B(x,k))$ may be replaced by considering just $V(B(v,k))$~itself. In either case, one can show that $\smash{G^{(P_{0})}\ge G^{(P_{1})}\ge G^{(P_{2})}\ge\!\cdots\!\ge G^{(P_{k})}\ge\!\dots\!\ge\overline{G}\ge G}$ as well as $\smash{\left(G^{(P_{k})}\right)^{\hspace{-1.5pt}\scaleto{(P_{k})}{5pt}}=G^{(P_{k})}}$ and $\smash{\bigcap_{k\in\bbN_{0}}G^{(P_{k})}=\overline{G}}$. This suggests to parametrise (closed) subgroups of $\Aut(T)$ by parametrising ($P_{k}$)-closed groups and forming intersections. Finally, we also say \textbf{$(P)$-closed} instead of $(P_{1})$-closed.

In \cite{RS26}, Reid--Smith introduce a powerful parametrisation of ($P$)-closed groups based on the following combinatorial structure. See also the survey article \cite{RS23}.

\begin{definition}[{\cite[Definition 3.1]{RS26}}]\label{def:local_action_diagram}
A \textbf{local action diagram} $\Delta$ is a triple $(\Gamma, (X_{a})_{a \in A\Gamma}, (G(v))_{v \in V\Gamma})$ consisting of
\begin{enumerate}[(i)]
	\item a connected graph $\Gamma$,
	\item pairwise disjoint, non-empty sets $X_{a}$ ($a\in A\Gamma$), and
	\item closed subgroups $G(v)\le\Sym(X_{v})$ ($v\in V\Gamma$), where $\smash{X_{v}:=\bigsqcup_{a\in o^{-1}(v)}X_{a}}$, such that the sets $X_{a}$ ($\smash{a\in o^{-1}(v)}$) are precisely the orbits of $G(v)$.
\end{enumerate}
Each $X_{a}$ is a \textbf{colour set}, its elements are \textbf{colours}, and each $G(v)$ is a \textbf{local action}.
\end{definition}

Reid--Smith introduce a notion of isomorphism for local action diagrams and show in \cite[Theorem 3.3]{RS26} that there is a one-to-one correspondence between isomorphism classes of ($P$)-closed actions ($T,G$), where $T$ is a tree and $G\le\Aut(T)$, and isomorphism classes of local action diagrams. In the following we describe how to pass from a ($P$)-closed action to a local action diagram, and conversely. Passing from a group acting on a tree to a local action diagram is straightforward.

\begin{definition}[{\cite[Definition 3.6]{RS26}}]
Let $T$ be a tree and $G\le\Aut(T)$. Define a local action diagram $\Delta(T,G)=(\Gamma,(X_{a}),(G(v)))$ as follows.
\begin{enumerate}[(i)]
	\item Put $\Gamma:=G\backslash T$ and let $\pi:T\to G\backslash T$ be the quotient map.
	\item For every $v\in V\Gamma$ pick $\tilde{v}\in VT$ with $\pi(\tilde{v})=v$. Given $a\in A\Gamma$ with $o(a)=v$ define $X_{a}:=\{b\in o^{-1}(\tilde{v})\mid\pi(b)=a\}$. Then $X_{v}=o^{-1}(\tilde{v})$.
	\item For every $v\in V\Gamma$ let $G(v)\le\Sym(X_{v})$ be the closure of the permutation group induced on $X_{v}$ by $G_{\tilde{v}}$.
\end{enumerate}
\end{definition}

\newpage
To pass from a local action diagram to a group acting on a tree we first describe how to obtain a tree projecting onto $\Delta$, and then define a group acting on it.

\begin{definition}[{\cite[Definition 3.4]{RS26}}]
Let $\Delta=(\Gamma,(X_{a}),(G(v)))$ be a local action diagram. A $\Delta$-\textbf{tree} $\mathbf{T}$ is a triple $(T,\pi,\calL)$ consisting of a tree $T$, a surjective graph homomorphism $\pi:T\to\Delta$ and a map $\calL:AT\to\bigsqcup_{a\in A\Gamma}X_{a}$, termed $\Delta$-\textbf{colouring}, which for all $v\in VT$ and $a\in o^{-1}(\pi(v))$ restricts to bijections $\calL_{v}:o^{-1}(v)\to X_{\pi(v)}$ and $\calL_{v,a}:\{b\in o^{-1}(v)\mid \pi(b)=a\}\to X_{a}$.
\end{definition}

\begin{lemma}[{\cite[Lemma 3.5]{RS26}}]\label{lem:delta_tree_construction}
Let $\Delta$ be a local action diagram. Then there is a $\Delta$-tree $(T,\pi,\calL)$. Given another $\Delta$-tree $(T', \pi', \calL')$ there is a graph isomorphism $\alpha : T \to T'$ such that $\pi' \circ \alpha = \pi$. 
\end{lemma}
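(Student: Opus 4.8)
The plan is to observe that, although a $\Delta$-colouring $\calL$ records colours, its only \emph{structural} content at a vertex $v\in VT$ is that for each arc $a\in o^{-1}(\pi(v))$ there are exactly $|X_{a}|$ outgoing arcs $b\in o^{-1}(v)$ with $\pi(b)=a$, since $\calL_{v,a}$ is a bijection onto $X_{a}$. Thus, up to the datum of the colouring, a $\Delta$-tree is a tree $T$ equipped with a surjective homomorphism $\pi:T\to\Gamma$ whose fibre over each arc $a$ has the prescribed cardinality $|X_{a}|$ at every vertex; moreover the asserted isomorphism $\alpha$ is required to respect $\pi$ but \emph{not} $\calL$. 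Both existence and uniqueness will therefore reduce to the standard construction of a tree covering $\Gamma$ with prescribed branching, with the local actions $G(v)$ playing no role.

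For existence I would build $T$ by breadth-first induction from a base vertex. Fix $v_{0}\in V\Gamma$, declare a root $\tilde v_{0}$ with $\pi(\tilde v_{0}):=v_{0}$, and for every colour $x\in X_{v_{0}}$, which lies in a unique $X_{a}$ with $a\in o^{-1}(v_{0})$, adjoin an outgoing arc at $\tilde v_{0}$ terminating in a fresh vertex, with $\pi$-image $a$ and $\calL$-value $x$. Inductively, a vertex $w$ created at the previous stage carries a single outgoing arc so far, namely the reverse $\overline b$ of the arc $b$ that produced it; writing $v:=\pi(w)$ I assign $\calL(\overline b)$ to be any chosen colour of $X_{r(\pi(b))}$ and then, for each remaining colour $y\in X_{v}\setminus\{\calL(\overline b)\}$ (lying in a unique $X_{c}$, $c\in o^{-1}(v)$), adjoin a new outgoing arc at $w$ to a fresh vertex, with $\pi$-image $c$ and $\calL$-value $y$. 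By construction each $\calL_{w}$ is then a bijection onto $X_{v}$ restricting to bijections onto the $X_{c}$, so $\calL$ is a $\Delta$-colouring; since every new arc goes to a fresh vertex the result is simple and acyclic, and it is connected and non-empty, hence a tree; and $\pi$ is a homomorphism, surjective because $\Gamma$ is connected and every outgoing arc of $\Gamma$ is realised at each lift, so that any path in $\Gamma$ from $v_{0}$ lifts.

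For uniqueness, given a second $\Delta$-tree $(T',\pi',\calL')$ I would construct $\alpha$ by the analogous breadth-first matching. Choosing $\tilde v_{0}'\in VT'$ over $v_{0}$, which exists by surjectivity of $\pi'$, set $\alpha(\tilde v_{0}):=\tilde v_{0}'$; at each already-matched vertex $w$ with image $\alpha(w)$, match the outgoing arcs of $w$ with those of $\alpha(w)$ by any fibre-preserving bijection, which exists because for every $a\in o^{-1}(\pi(w))$ both $w$ and $\alpha(w)$ have exactly $|X_{a}|$ outgoing arcs over $a$. The sole constraint is that the reverse $\overline b$ of the incoming arc must be sent to $\overline{\alpha(b)}$; this is consistent because $\overline{\alpha(b)}$ is the unique already-forced outgoing arc of $\alpha(w)$ over $r(\pi(b))$, so removing it from both sides leaves equal fibre counts for the free matching. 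Extending along the geodesics from $\tilde v_{0}$, which are unique since $T$ is a tree, defines $\alpha$ on all of $T$; it commutes with $o,t,r$ by construction, is bijective since the matching is a bijection on each fibre at each level and both trees are connected, and satisfies $\pi'\circ\alpha=\pi$ by fibre-preservation.

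The point most likely to cause confusion, and the one I would flag explicitly, is that the desired $\alpha$ is only required to intertwine $\pi$ and $\pi'$, not the two colourings: a $\Delta$-colouring imposes no relation between the colour of an arc and that of its reverse, so the two $\Delta$-trees may genuinely carry different colourings on corresponding reverse arcs. Recognising that the colourings are interchangeable scaffolding, and that the invariant content is the fibre-cardinality data over $\Gamma$, is exactly what makes the reverse-arc bookkeeping in the inductive matching go through without obstruction.
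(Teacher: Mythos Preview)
Your proposal is correct and matches the paper's approach: the paper defers the full proof to \cite[Lemma~3.5]{RS20} and only spells out the existence construction, which is precisely your breadth-first build rephrased so that vertices are literally the coloured paths from the root (with chosen reverse colours). Your uniqueness argument via level-by-level fibre-preserving matching is the standard one and is exactly what the cited reference does; your emphasis that $\alpha$ need only intertwine $\pi,\pi'$ and not the colourings is the key point.
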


We refer to \cite[Lemma 3.5]{RS26} for a full proof of Lemma~\ref{lem:delta_tree_construction} but include the construction of $\Delta$-trees here for the reader's convenience. See also Example~\ref{ex:delta_tree}.

The following definitions are needed to construct~$T$. Let $\Delta=(\Gamma,(X_{a}),(G(v)))$ be a local action diagram. Given $v\in V\Gamma$ and $c\in X_{v}$ the \textbf{type} $p(c)$ of $c$ is the unique arc $a\in o^{-1}(v)$ with $c\in X_{a}$. A \textbf{coloured path} of length $n\in\mathbb{N}_{0}$ in $\Gamma$ is a sequence $\calC=(c_1, c_2, \dots c_{n})$ of colours such that $o(p(c_{i+1})) = t(p(c_{i}))$ for all $1 \leq i < n$. The \textbf{origin} of $\calC$ is $o(p(c_{1}))$. For all $m\le n$, the path $(c_{1},c_{2},\ldots,c_{m})$ is a \textbf{prefix} of~$\calC$, and any path $(d_{1},d_{2},\ldots,d_{n})$ with $\smash{p(d_{i})=\overline{p(c_{i})}}$ for all $1\le i\le n$ is a \textbf{reverse} of $\calC$.

Pick a base vertex $v_{0}\in V\Gamma$. We inductively define $VT$ as a set of coloured paths in $\Gamma$ originating at $v_{0}$ with chosen reverses. Start with a self-reverse root vertex $()$ and the paths $\{(c)\mid c\in X_{v_{0}}\}$. For every $c\in X_{v_{0}}$ pick $\smash{\overline{c}\in X_{\overline{p(c)}}}$ and put $\smash{\overline{(c)}:=(\overline{c})}$. Now, given $v=(c_{1},c_{2},\ldots,c_{n})$ with reverse $\overline{v}=(d_{1},d_{2},\ldots,d_{n})$ we introduce a new vertex $v_{+c_{n+1}}:=(c_{1},c_{2},\ldots,c_{n},c_{n+1})$ for all $c_{n+1}\in X_{t(p(c_{n}))}\backslash\{d_{n}\}$. Choose  $\smash{d_{n+1}\in X_{\overline{p(c_{n+1})}}}$ and set $\smash{\overline{v_{+c_{n+1}}}}:=(d_{1},d_{2},\ldots,d_{n+1})$.

Depending on the context it is useful to think of vertices or $T$ either as coloured paths or merely as symbols. Define $AT:=AT_{+}\sqcup AT_{-}$, where $AT_{+}$ consists of pairs $(v,w)$ of vertices of $T$ such that $v$ is a prefix of $w$ of length one less than $w$, and $AT_{-}=\{(w,v)\mid (v,w)\in AT_{+}\}$. Given an arc $(v,w)\in AT$, define origin, terminus and reversal by $o(v,w):=v$, $t(v,w):=w$ and $r(v,w):=(w,v)$ respectively. Further, let $\calL(v,w)$ be the last entry of $w$ and $\calL(w,v)$ be the last entry of $\overline{w}$.

We define the graph homomorphism $\pi:T\to\Gamma$ on vertices by $\pi(()):=v_{0}$ and $\pi(v)=t(p(c_{n}))$ for any $v=(c_{1},c_{2},\ldots,c_{n})\in VT$. For arcs, we set $\pi(a):=p(\calL(a))$.

\begin{example}\label{ex:delta_tree}
Consider the local action diagram $\Delta$ on two vertices in Figure~\ref{fig:lad_and_tree}.

\begin{figure}[ht]
\raisebox{-.5\height}{
\begin{tikzpicture}[scale=1.5]
	\begin{scope}
		[decoration={markings,
			mark=at position 0.52 with {\arrow{>}},
			mark=at position 0.5 with {\node[above]{\small{$\{1,\!2\}$}};}},
		line width=0.6pt]
		\draw [postaction=decorate] (0,0) .. controls (1/4,1/4) and (3/4,1/4) .. (1,0);
	\end{scope}
	\begin{scope}
		[decoration={markings,
			mark=at position 0.52 with {\arrow{>}},
			mark=at position 0.5 with {\node[below]{\small{$\{\!1'\!,\!2'\!,\!3'\!\}$}};}},
		line width=0.6pt]
		\draw [postaction=decorate] (1,0) .. controls (3/4,-1/4) and (1/4,-1/4) .. (0,0);
	\end{scope}
	
	\node (1) at (0,0) {};
	\draw [fill=white] (1) circle [radius=1.25pt];
	\node [below=0.1cm] at (1) {\small{$S_{2}$}};
	
	\node (2) at (1,0) {};
	\draw [fill] (2) circle [radius=1.25pt];
	\node [below=0.1cm] at (2) {\small{$A_{3}$}};
\end{tikzpicture}
}
\hspace{0.5cm}
\raisebox{-.5\height}{
\begin{tikzpicture}[scale=1.4]
	
	\node[fill=white,draw=black,circle,minimum size=4pt,inner sep=0pt] (origin) at (0, 0) {};
	\node[fill,circle,minimum size=4pt,inner sep=0pt] (1) at ($(origin) + (180:1)$) {};
	\node[fill,circle,minimum size=4pt,inner sep=0pt] (2) at ($(origin) + (0:1)$) {}; 
	\node[fill=white,draw=black,circle,minimum size=4pt,inner sep=0pt] (12) at ($(1) + (135:1)$) {};
	\node[fill=white,draw=black,circle,minimum size=4pt,inner sep=0pt] (13) at ($(1) + (225:1)$) {}; 
	\node[fill=white,draw=black,circle,minimum size=4pt,inner sep=0pt] (22) at ($(2) + (45:1)$) {};
	\node[fill=white,draw=black,circle,minimum size=4pt,inner sep=0pt] (23) at ($(2) + (315:1)$) {}; 
	\node[fill,circle,minimum size=4pt,inner sep=0pt] (122) at ($(12) + (135:1)$) {}; 
	\node[fill,circle,minimum size=4pt,inner sep=0pt] (132) at ($(13) + (225:1)$) {}; 
	\node[fill,circle,minimum size=4pt,inner sep=0pt] (222) at ($(22) + (45:1)$) {}; 
	\node[fill,circle,minimum size=4pt,inner sep=0pt] (232) at ($(23) + (315:1)$) {}; 
	\node[circle,minimum size=4pt,inner sep=0pt, anchor=center,rotate=140] (dots1) at ($(122) + (135:0.3) $) {$\dots$}; 
	\node[circle,minimum size=3pt,inner sep=0pt, anchor=center,rotate=220] (dots2) at ($(132) + (225:0.3)$) {$\dots$}; 
	\node[circle,minimum size=3pt,inner sep=0pt, anchor=center,rotate=40] (dots3) at ($(222) + (45:0.3)$) {$\dots$}; 
	\node[circle,minimum size=3pt,inner sep=0pt, anchor=center,rotate=320] (dots4) at ($(232) + (315:0.3)$) {$\dots$}; 
	
	\begin{scope}[decoration={markings, mark=at position 0.53 with {\arrow{>}}},line width=0.6pt]			
		\begin{scope}[label distance=-0.1cm]
			\draw[postaction={decorate},color=black] (origin) to [bend left] node[label=270:$1$] {} (1);
			\draw[postaction={decorate},color=black] (1) to [bend left] node[label=90:$1'$] {} (origin);
			\draw[postaction={decorate},color=black] (origin) to [bend left] node[label=90:$2$] {} (2);
			\draw[postaction={decorate},color=black] (2) to [bend left] node[label=270:$1'$] {} (origin);
		\end{scope}[label distance=-0.25cm]
		\begin{scope}[label distance=-0.25cm]
			\draw[postaction={decorate},color=black] (1) to [bend left] node[label=225:$2'$] {} (12);
			\draw[postaction={decorate},color=black] (1) to [bend left] node[label=315:$3'$] {} (13);	
			\draw[postaction={decorate},color=black] (12) to [bend left] node[label=45:$1$] {} (1);
			\draw[postaction={decorate},color=black] (13) to [bend left] node[label=135:$1$] {} (1);
			\draw[postaction={decorate},color=black] (2) to [bend left] node[label=135:$2'$] {} (22);
			\draw[postaction={decorate},color=black] (2) to [bend left] node[label=45:$3'$] {} (23);
			\draw[postaction={decorate},color=black] (22) to [bend left] node[label=315:$1$] {} (2);
			\draw[postaction={decorate},color=black] (23) to [bend left] node[label=225:$1$] {} (2);
			\draw[postaction={decorate},color=black] (12) to [bend left] node[label=225:$2$] {} (122);
			\draw[postaction={decorate},color=black] (122) to [bend left] node[label=45:$1$] {} (12);
			\draw[postaction={decorate},color=black] (13) to [bend left] node[label=315:$2$] {} (132);
			\draw[postaction={decorate},color=black] (132) to [bend left] node[label=135:$1'$] {} (13);
			\draw[postaction={decorate},color=black] (22) to [bend left] node[above] {\small{$2$}} (222);
			\draw[postaction={decorate},color=black] (222) to [bend left] node[label=315:$1'$] {} (22);
			\draw[postaction={decorate},color=black] (23) to [bend left] node[label=45:$2$] {} (232);
			\draw[postaction={decorate},color=black] (232) to [bend left] node[label=225:$1'$] {} (23);
		\end{scope}
	\end{scope}
\end{tikzpicture}
}
\caption{A local action diagram
\label{fig:lad_and_tree} $\Delta$ and associated $\Delta$-tree.}
\end{figure}

\noindent
Going through the process described above, we see that every $\Delta$-tree is isomorphic to the $(2,3)$-regular tree. Starting at a chosen base vertex, e.g. the central open vertex, note the choices that were made for the reverse labels in Figure~\ref{fig:lad_and_tree}.
\end{example}

Given a $\Delta$-tree $\mathbf{T}=(T,\pi,\calL)$ associated to a local action diagram $\Delta$ we define a ($P$)-closed group acting on it as a subgroup of $\Aut_{\pi}(T):=\{g\in\Aut(T)\mid \pi\circ g=\pi\}$, the group of automorphisms of $T$ that respect $\pi$. To this end, we first define the \textbf{local action} of an automorphism $g\in\Aut_{\pi}(T)$ at a vertex $v\in VT$ using the map
\begin{displaymath}
	\sigma_{\calL,v}:\Aut_{\pi}(T)\to\Sym(X_{\pi(v)}),\ g\mapsto\sigma_{\calL}(g,v):=\calL\circ g\circ\calL|_{o^{-1}(v)}^{-1}.
\end{displaymath}

\begin{definition}[{\cite[Definition 3.8]{RS26}}]\label{def:universal_group}
Let $\Delta=(\Gamma,(X_{a}),(G(v)))$ be a local action diagram and $\mathbf{T}=(T,\pi,\calL)$ be a $\Delta$-tree. The \textbf{universal group of $\mathbf{T}$ with respect to the local actions $(G(v))_{v}$} is
\begin{displaymath}
	\mathbf{U}(\mathbf{T},(G(v))):=\{g\in\Aut_{\pi}(T)\mid \forall v\in VT:\ \sigma_{\calL,v}(g)\in G(\pi(v))\le\Sym(X_{\pi(v)})\}.
\end{displaymath}
\end{definition}

In Example~\ref{ex:delta_tree}, the universal group in the sense of Definition~\ref{def:universal_group} coincides with Smith's (\cite{Smi17}) group $\mathrm{U}(S_{2},A_{3})$ up to an isomorphism of group actions.

By \cite[Theorem 3.9]{RS26}, elements of $\mathbf{U}(\mathbf{T},(G(v)))$ are readily constructed.

\begin{lemma}\label{lem:universal_extension}
Let $\Delta = (\Gamma, (X_{a}), (G(v)))$ be a local action diagram, $\mathbf T = (T, \pi, \calL)$ be a $\Delta$-tree, and $G:=\mathbf U(\mathbf{T},(G(v)))\le\Aut_{\pi}(T)$. Let $w\in VT$ and $\sigma\in G(\pi(w))$. Then there is an element $g\in G_{w}$ with $\sigma_{\calL,v}(g)=\sigma$. Moreover, if $\sigma$ fixes $\calL(b)$ for some $b\in o^{-1}(w)$ then $g$ can be chosen to fix the half-tree $T_{b}$.
\end{lemma}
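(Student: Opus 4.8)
The plan is to build $g$ by hand, propagating $\sigma$ outward from $w$ through $T$ and choosing the local action at every other vertex to be trivial unless it is forced to be otherwise; this is essentially the construction behind \cite[Theorem~3.9]{RS20}. Root $T$ at $w$ and induct on $d(w,\,\cdot\,)$. To start, set $g(w):=w$ and prescribe the local action of $g$ at $w$ to be $g_{w}:=\sigma$; concretely, for each $b\in o^{-1}(w)$ let $g(b)$ be the unique arc in $o^{-1}(w)$ with $\calL(g(b))=\sigma(\calL(b))$ --- unique because $\calL_{w}\colon o^{-1}(w)\to X_{\pi(w)}$ is a bijection --- and put $g(\overline{b}):=\overline{g(b)}$ and $g(t(b)):=t(g(b))$. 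Since $\sigma\in G(\pi(w))$ preserves each of its orbits $X_{a}$ ($a\in o^{-1}(\pi(w))$), one gets $\pi(g(b))=p(\sigma(\calL(b)))=\pi(b)$, hence $\pi\circ g=\pi$ on $B_{1}(w)$.

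For the inductive step, suppose $g$ is already defined on $B_{n}(w)$ with $\pi\circ g=\pi$ there, and that for every vertex $v$ with $d(w,v)<n$ a permutation $g_{v}\in G(\pi(v))$ has been fixed so that $g$ on $o^{-1}(v)$ is ``permute colours by $g_{v}$''. Take a vertex $v$ with $d(w,v)=n$ and let $b\in o^{-1}(v)$ be the arc pointing towards $w$; then $g(v)$ and $g(b)=:b'$ are already defined, with $o(b')=g(v)$ and $\pi(b')=\pi(b)$, so $\calL(b')\in X_{\pi(b)}$. Here is the one place where the defining axiom of a local action diagram is used: the orbits of $G(\pi(v))$ on $X_{\pi(v)}$ are exactly the colour sets $X_{a}$, $a\in o^{-1}(\pi(v))$, so since $\calL(b)$ and $\calL(b')$ both lie in $X_{\pi(b)}$ there is $h\in G(\pi(v))$ with $h(\calL(b))=\calL(b')$. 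Fix $g_{v}:=h$ (exercising a genuine choice here) and extend $g$ to $o^{-1}(v)$ by sending each $c\in o^{-1}(v)$ to the unique arc in $o^{-1}(g(v))$ of colour $g_{v}(\calL(c))$, with $g(\overline{c}):=\overline{g(c)}$ and $g(t(c)):=t(g(c))$; the choice of $h$ makes this agree with the already-defined $g(b)=b'$, and $\pi\circ g=\pi$ is preserved. Doing this for all vertices at distance $n$ and letting $n\to\infty$ defines $g$ on all of $T$, since $T$ is connected.

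It then remains to verify the claimed properties, all of which are routine. By construction $g$ is a graph endomorphism of $T$ with $\pi\circ g=\pi$, and a straightforward induction (using that each $g_{v}$ is a bijection of $X_{\pi(v)}$) shows $g$ maps $B_{n}(w)$ bijectively onto $B_{n}(w)$ for every $n$, so $g\in\Aut_{\pi}(T)$. Unwinding the definition of $\sigma_{\calL,v}$ gives $\sigma_{\calL,v}(g)=g_{v}\in G(\pi(v))$ for all $v\in VT$, so $g\in\mathbf U(\mathbf T,(G(v)))=G$; and $g(w)=w$ with $\sigma_{\calL,w}(g)=g_{w}=\sigma$, which is the main claim. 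For the last sentence, if $\sigma(\calL(b))=\calL(b)$ for some $b\in o^{-1}(w)$ then $g(b)=b$, so $g$ fixes $t(b)$, and $T_{b}$ is precisely the subtree of $T$ rooted at $t(b)$ once $T$ is rooted at $w$. Now rerun the construction making the choice $g_{v}:=\id$ for every $v\in V(T_{b})$: this is legitimate because at $v=t(b)$ the forced condition reads $g_{v}(\calL(\overline{b}))=\calL(\overline{g(b)})=\calL(\overline{b})$, and inductively $g_{v^{-}}=\id$ forces $g$ to fix the arc from $v^{-}$ to $v$ so the condition on $g_{v}$ is again ``fix one colour'', both satisfied by $\id$. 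Distinct branches at $w$ are processed independently, so these choices do not clash with the rest, and with $g_{v}=\id$ throughout $V(T_{b})$ the map $g$ fixes every vertex and arc of $T_{b}$.

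I expect the only genuine step to be the existence of $g_{v}$ in the inductive step --- i.e.\ that $\calL(b)$ and $\calL(b')$ lie in a common $G(\pi(v))$-orbit --- which is exactly where axiom (iii) of Definition~\ref{def:local_action_diagram} (colour sets are orbits of the local action) enters; the remaining verifications, including bijectivity of $g$ and the triviality bookkeeping for the half-tree, are all routine.
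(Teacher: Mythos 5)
Your proposal is correct and is essentially the paper's own argument: the paper's proof is a one-line reference to the inductive ball-by-ball construction from \cite[Theorem 3.9]{RS20}, extending with trivial local actions in the direction of $b$, and you have simply written out that construction in full, correctly isolating the one substantive point (that $\calL(b)$ and $\calL(g(b))$ lie in the common $G(\pi(v))$-orbit $X_{\pi(b)}$, by axiom (iii) of the definition of a local action diagram).
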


\begin{proof}
As in the proof of \cite[Theorem 3.9]{RS26}, inductively define $g$ on balls of radius $n\in\mathbb{N}$ around $w$, extending it with trivial local actions in direction $b$.
\end{proof}

\subsubsection{Invariant Subtrees and Ends}

Subtrees and ends of a tree that are invariant under a group action correspond to certain partial orientations of the associated local action diagram. This is described in \cite[Section 5]{RS26} and summarised below.

\begin{definition}[{\cite[Definition 5.2]{RS26}}]
Let $\Gamma$ be a graph. A \textbf{confluent partial orientation} of $\Gamma$ is a partial orientation $O\subseteq A\Gamma$ such that for all $v\in V\Gamma$ we have $|o^{-1}(v)\cap O|\le 1$. A \textbf{strongly confluent partial orientation}, or \textbf{scopo}, of $\Gamma$ is a confluent partial orientation $O$ of $\Gamma$ such that whenever $ o^{-1}(v)\cap O=\{a\}$ for some $v\in V\Gamma$ then $\overline{b}\in O$ for all $b\in o^{-1}(v)\backslash\{a\}$.
\end{definition}

\vspace{-0.4cm}
\begin{figure}[ht]
\begin{tikzpicture}[scale=1.4]
	\node[fill,circle,minimum size=4pt,inner sep=0pt] (origin) at (0, 0) {};
	\node[fill,fill,circle,minimum size=4pt,inner sep=0pt] (0) at ($(origin) + (0:1)$) {};
	\node[fill,fill,circle,minimum size=4pt,inner sep=0pt] (1) at ($(origin) + (72:1)$) {};
	\node[fill,fill,circle,minimum size=4pt,inner sep=0pt] (2) at ($(origin) + (144:1)$) {};
	\node[fill,fill,circle,minimum size=4pt,inner sep=0pt] (3) at ($(origin) + (216:1)$) {};
	\node[fill,fill,circle,minimum size=4pt,inner sep=0pt] (4) at ($(origin) + (288:1)$) {};
	
	\begin{scope}[decoration={markings, mark=at position 0.54 with {\arrow{>}}},line width=0.6pt]			
		\draw[postaction={decorate},color=black,dotted] (origin) to [bend left] node[] {} (0);
		\draw[postaction={decorate},color=black] (0) to [bend left] node[] {} (origin);
		\draw[postaction={decorate},color=black] (origin) to [bend left] node[] {} (1);
		\draw[postaction={decorate},color=black] (1) to [bend left] node[] {} (origin);
		\draw[postaction={decorate},color=black] (origin) to [bend left] node[] {} (2);
		\draw[postaction={decorate},color=black] (2) to [bend left] node[] {} (origin);
		\draw[postaction={decorate},color=black] (origin) to [bend left] node[] {} (3);
		\draw[postaction={decorate},color=black,dotted] (3) to [bend left] node[] {} (origin);
		\draw[postaction={decorate},color=black] (origin) to [bend left] node[] {} (4);
		\draw[postaction={decorate},color=black] (4) to [bend left] node[] {} (origin);
	\end{scope}
\end{tikzpicture}
\hspace{2cm}
\begin{tikzpicture}[scale=1.4]
	\node[fill,circle,minimum size=4pt,inner sep=0pt] (origin) at (0, 0) {};
	\node[fill,fill,circle,minimum size=4pt,inner sep=0pt] (0) at ($(origin) + (0:1)$) {};
	\node[fill,fill,circle,minimum size=4pt,inner sep=0pt] (1) at ($(origin) + (72:1)$) {};
	\node[fill,fill,circle,minimum size=4pt,inner sep=0pt] (2) at ($(origin) + (144:1)$) {};
	\node[fill,fill,circle,minimum size=4pt,inner sep=0pt] (3) at ($(origin) + (216:1)$) {};
	\node[fill,fill,circle,minimum size=4pt,inner sep=0pt] (4) at ($(origin) + (288:1)$) {};
	
	\begin{scope}[decoration={markings, mark=at position 0.54 with {\arrow{>}}},line width=0.6pt]
		\draw[postaction={decorate},color=black,dashed] (origin) to [bend left] node[] {} (0);
		\draw[postaction={decorate},color=black] (0) to [bend left] node[] {} (origin);
		\draw[postaction={decorate},color=black] (origin) to [bend left] node[] {} (1);
		\draw[postaction={decorate},color=black,dashed] (1) to [bend left] node[] {} (origin);
		\draw[postaction={decorate},color=black] (origin) to [bend left] node[] {} (2);
		\draw[postaction={decorate},color=black,dashed] (2) to [bend left] node[] {} (origin);
		\draw[postaction={decorate},color=black] (origin) to [bend left] node[] {} (3);
		\draw[postaction={decorate},color=black,dashed] (3) to [bend left] node[] {} (origin);
		\draw[postaction={decorate},color=black] (origin) to [bend left] node[] {} (4);
		\draw[postaction={decorate},color=black,dashed] (4) to [bend left] node[] {} (origin);
	\end{scope}
\end{tikzpicture}
\caption{Confluent (dotted) versus strongly confluent (dashed).}
\label{fig:copo_scopo}
\end{figure}
\vspace{-0.2cm}

For local action diagrams, the definition of a (strongly) confluent partial orientation takes the size of the colour sets into account.

\begin{definition}
Let $\Delta\!=\!(\Gamma,(X_{a}),(G(v)))$ be a local action diagram. A (\textbf{strongly}) \textbf{confluent partial orientation} $O$ of $\Delta$ is a (strongly) confluent partial orientation of $\Gamma$ such that $|X_{a}| = 1$ for all $a \in O$. 
\end{definition}

Similarly, we single out certain ends of local action diagrams.

\begin{definition}
Let $\Delta\!=\!(\Gamma,\! (X_{a}),\! (G(v)))$ be a local action diagram. A \textbf{horocyclic end} of $\Delta$ is an end $\xi\!\in\!\partial\Gamma$ such that $|X_{a}|\!=\!1$ for all arcs $a\in A\Gamma$ oriented towards $\xi$.
\end{definition}

Strongly confluent partial orientations of connected graphs fall into three distinct types, see Definition~\ref{def:scopo_types}. To describe them we introduce further notation.

Let $\Gamma$ be a graph. A directed path with vertices $(v_0, v_1, \dots, v_{n})$  ($n \geq 2$) in $\Gamma$ is \textbf{backtracking} if $v_{i} = v_{i+2}$ for some $0 \leq i \leq n-2$. Given a subgraph $\Gamma' \subseteq \Gamma$, a~\textbf{projecting path} from $v \in V(\Gamma)$ to $\Gamma'$ is a non-backtracking path starting at $v$, terminating at some $v' \in V(\Gamma')$, and such that $v_{i} \notin V(\Gamma')$ for all $i < n$. Note that if $v \in V(\Gamma')$ then $(v)$ is a projecting path from $v$ to $\Gamma'$.

\begin{definition}
Let $\Delta=(\Gamma,(X_{a}),(G(v)))$ be a local action diagram. A \textbf{cotree} $\Gamma'$ of $\Gamma$ is a non-empty, induced subgraph such that for every $v \in V\Gamma\backslash V\Gamma'$ there is a unique projecting path from $v$ to $\Gamma'$. A \textbf{cotree} $\Gamma'$ of $\Delta$ is a cotree of $\Gamma$ such that $|X_{a}|=1$ for all arcs $a\in A\Gamma$ that belong to a projecting path to $\Gamma'$.
\end{definition}

Note that a cotree of a graph is a subgraph whose complement has no cycles. Also, due to uniqueness of projecting paths, a cotree in a connected graph is connected.
A finite connected graph $\Gamma=(V,A,o,t,r)$ is \textbf{cyclic} if it can be written as a cycle of some length $n\in\mathbb{N}$ as per Section~\ref{sec:graphs}. A \textbf{cyclic orientation} of a cyclic graph $\Gamma$ is an orientation of the graph which includes only one element of $o^{-1}(v)$ for all $v \in \Gamma$. Every cycle graph has exactly two cyclic orientations.

\begin{definition}\label{def:scopo_types}
Let $\Gamma$ be a connected graph. A scopo $O$ of $\Gamma$ is of type
\begin{enumerate}[label=(\alph*)]
	\item if there is a cotree $\Gamma'$ of $\Gamma$ such that $O$ consists of all arcs $a\in A\Gamma$ such that $o(a)\notin V\Gamma'$ and $a$ lies on a projecting path from $o(a) $ to $\Gamma'$. Write $O=O_{\Gamma'}$.
	\item if there is a cyclic cotree $\Gamma'$ of $\Gamma$ such that $O$ is the union of $O_{\Gamma'}$ with a cyclic orientation of $\Gamma'$. Write $O=O_{\Gamma'}^{+}$.
	\item if $\Gamma$ is a tree and there is an end $\xi\in\partial\Gamma$ such that $O$ consists of all arcs $a\in A\Gamma$ oriented towards $\xi$. Write $O=O_{\xi}$.
\end{enumerate}
\end{definition}

The types of Definition~\ref{def:scopo_types} account for all possible scopos by \cite[Section~5]{RS26}. In particular, if $\Gamma$ is a tree, then only type (a) and (c) occur and so scopos correspond to cotrees $\Gamma'$ and ends $\xi$. Moreover, invariance of the cotree or end under a group action is equivalent to invariance of the corresponding scopo.

\begin{lemma}[{\cite[Lemma 5.7]{RS26}}]\label{lem:scopo_preimage}
Let $T$ be a tree, $G \leq \Aut(T)$, $\Delta=\Delta(T,G)$ the associated local action diagram, and $\pi = \pi_{(T, G)}$. Then $O$ is a $G$-invariant scopo of $T$ if and only if $O = \pi^{-1}(O')$ for a scopo $O'$ of $\Delta$. 
\end{lemma}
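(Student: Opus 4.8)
The plan is to prove slightly more than the stated equivalence: I would show that $O' \mapsto \pi^{-1}(O')$ is a bijection from the set of scopos of $\Delta$ onto the set of $G$-invariant scopos of $T$, arguing directly from the definitions of scopo and of $\Delta(T,G)$ rather than through the classification of scopos into types (a)--(c). The key preliminary observation, which I would record first, is that, writing $\Gamma = G\backslash T$, for every $v \in VT$ the quotient map restricts to a \emph{surjection} $\pi \colon o^{-1}(v) \to o^{-1}(\pi(v))$ whose fibre over an arc $a' \in o^{-1}(\pi(v))$ is a single $G_{v}$-orbit; since any two lifts of a vertex of $\Gamma$ lie in the same $G$-orbit, this fibre has cardinality $|X_{a'}|$ for \emph{every} lift $v$ of $o(a')$, not merely the chosen one. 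Injectivity of $O' \mapsto \pi^{-1}(O')$ is then automatic, because $\pi$ is surjective on arcs and hence $\pi(\pi^{-1}(O')) = O'$.

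For the implication "$\pi^{-1}(O')$ is a $G$-invariant scopo of $T$" I would verify the defining properties of $O := \pi^{-1}(O')$ one at a time. $G$-invariance is immediate from $\pi \circ g = \pi$, and $O$ is a partial orientation because $\pi(\overline{a}) = \overline{\pi(a)}$ gives $\pi^{-1}(\overline{O'}) = \overline{O}$, forcing $O \cap \overline{O} = \emptyset$. For confluence at $v \in VT$: if $a' \in o^{-1}(\pi(v)) \cap O'$ then $|X_{a'}| = 1$ by the colour-set condition in the definition of a scopo of $\Delta$, so exactly one arc of $o^{-1}(v)$ lies over $a'$, and since $|o^{-1}(\pi(v)) \cap O'| \le 1$ we get $|o^{-1}(v) \cap O| \le 1$. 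For strong confluence: if $o^{-1}(v) \cap O = \{a\}$ then $o^{-1}(\pi(v)) \cap O' = \{\pi(a)\}$ and $|X_{\pi(a)}| = 1$, so every $b \in o^{-1}(v) \setminus \{a\}$ has $\pi(b) \ne \pi(a)$; the scopo condition on $O'$ then gives $\pi(\overline{b}) = \overline{\pi(b)} \in O'$, i.e. $\overline{b} \in O$, as needed.

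For the converse I would start from a $G$-invariant scopo $O$ of $T$, set $O' := \pi(O)$, and note that $G$-invariance gives $\pi^{-1}(O') = \pi^{-1}(\pi(O)) = GO = O$, so it only remains to check $O'$ is a scopo of $\Delta$. That $O'$ is a partial orientation: an arc in $O' \cap \overline{O'}$ would be $\pi(b) = \pi(\overline{c})$ with $b, c \in O$, forcing $b$ and $\overline{c}$ into one $G$-orbit and hence $\overline{c} \in O$ by $G$-invariance, contradicting $c \in O$. That $|X_{a'}| = 1$ for $a' \in O'$: picking $b \in O$ over $a'$ and $v := o(b)$, all $|X_{a'}|$ arcs of $o^{-1}(v)$ over $a'$ lie in $\pi^{-1}(O') = O$, so confluence of $O$ forces $|X_{a'}| = 1$. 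Confluence of $O'$ at $v' \in V\Gamma$: lift $v'$ to $v$; two distinct arcs of $o^{-1}(v') \cap O'$ lift to two distinct arcs of $o^{-1}(v)$, both in $\pi^{-1}(O') = O$, contradicting confluence of $O$. Strong confluence of $O'$: if $o^{-1}(v') \cap O' = \{a'\}$, lift $v'$ to $v$; since $|X_{a'}| = 1$, the set $o^{-1}(v) \cap O$ is the singleton $\{a\}$ consisting of the unique arc over $a'$, strong confluence of $O$ gives $\overline{b} \in O$ for every $b \in o^{-1}(v) \setminus \{a\}$, and lifting an arbitrary $c' \in o^{-1}(v') \setminus \{a'\}$ to such a $b$ yields $\overline{c'} = \pi(\overline{b}) \in O'$.

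I expect the only real obstacle to be keeping the dictionary between $|X_{a'}|$ and the number of arcs over $a'$ emanating from a lift of $o(a')$ straight, and being disciplined about which lift one works with; once the fibre-size observation is in place, every remaining step is a short chase through the quotient map. It is worth recording that the argument uses neither the classification of scopos into types (a)--(c) nor any hypothesis that $G$ be ($P$)-closed, since $\Delta(T,G)$ and the correspondence make sense for an arbitrary $G \le \Aut(T)$.
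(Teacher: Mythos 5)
Your proposal is correct, but there is nothing in the paper to compare it against: Lemma~\ref{lem:scopo_preimage} is imported verbatim from \cite[Lemma 5.7]{RS20} and the present paper gives no proof of it, so you have supplied a self-contained argument for a black-boxed ingredient. Your verification is sound. The one point on which the whole argument hinges --- that for \emph{every} lift $v$ of a vertex $v'$ of $\Gamma$ the restriction $\pi\colon o^{-1}(v)\to o^{-1}(v')$ is surjective with fibre over $a'$ a single $G_{v}$-orbit of size $|X_{a'}|$, independently of which lift was used to define $X_{a'}$ --- is exactly the right preliminary, and you justify it correctly (two lifts of $v'$ differ by an element of $G$, which carries one fibre bijectively to the other). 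All four checks in each direction (partial orientation, the colour condition $|X_{a'}|=1$, confluence, strong confluence) then go through as you describe; in particular your treatment of a potential non-orientable loop in $O'$ is covered by the partial-orientation step, since $a'=\overline{a'}\in O'$ would force both an arc and its reverse into the $G$-invariant set $O$. Your closing remarks are also accurate: the argument needs neither the type (a)/(b)/(c) classification of Theorem~\ref{thm:attractor} nor ($P$)-closedness of $G$, matching the generality in which the lemma is stated.
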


In particular, Lemma~\ref{lem:scopo_preimage} shows that there is a correspondence between the scopos of a local action diagram and the invariant scopos of the associated action. To identify the corresponding invariant structure, see Proposition~\ref{prop:scopo_structure_correspondence}, we consider the scopo's \emph{attractor}. Given a confluent partial orientation $O$ of a graph $\Gamma$ define $f_{O}:V\Gamma\to V\Gamma$ on $v\in V\Gamma$ as follows: if  $o^{-1}(v) \cap O = \{a\}$ then $f_{O}(v):=t(a)$, and otherwise $f_{O}(v):=v$. Repeated application of $f_{O}$ to $v\in V\Gamma$ results either in an eventually periodic or an aperiodic sequence $(v, f_{O}(v), f_{O}^{2}(v), \dots)$. If it is eventually periodic, let $z_{O}(v)$ be the associated periodic orbit, and otherwise let $z_{O}(v)$ be the associated end. We then define the \textbf{attractor} of $O$ to be $K(O) = \bigcup_{v \in V(\Gamma)}z_{O}(v)$. In words, the attractor is the union of vertices of $\Gamma$ belonging to a periodic orbit of $f_{O}$ and all ends of $\Gamma$ associated to an aperiodic orbit of $f_{O}$.

\begin{theorem}[{\cite[Theorem 5.10]{RS26}}]\label{thm:attractor}
Let $\Gamma$ be a connected graph, $O$ a scopo of $\Gamma$, and $K$ the attractor of $O$. Then exactly one of the following holds.
\begin{enumerate}[label=(\alph*)]
	\item There is a cotree  $\Gamma'$ of $\Gamma$ such that $V\Gamma' = K$ and $O = O_{\Gamma'}$. 
	\item There is a cyclic cotree $\Gamma'$ of $\Gamma$ such that $V\Gamma' = K$ and $O = O_{\Gamma'}^{+}$ for a cyclic orientation of $\Gamma$.
	\item There is an end $\xi$ of $\Gamma$ such that $K = \{\xi\}$ and $O = O_{\xi}$.
\end{enumerate}
\end{theorem}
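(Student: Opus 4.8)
The plan is to invoke the trichotomy of strongly confluent partial orientations recorded above --- every scopo of a connected graph is of type (a), (b) or (c) in Definition~\ref{def:scopo_types}, by \cite[Section~5]{RS20} --- and then, case by case, read the attractor off the dynamics of $f_{O}$. Throughout I would use that $f_{O}$ is well-defined since $O$ is confluent, and that for each $v\in V\Gamma$ the orbit $(v,f_{O}(v),f_{O}^{2}(v),\dots)$ is either eventually periodic or has pairwise distinct terms.

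First suppose $O=O_{\Gamma'}$ for a cotree $\Gamma'$. If $v\in V\Gamma'$ then no arc of $O_{\Gamma'}$ originates at $v$, so $f_{O}(v)=v$. If $v\notin V\Gamma'$, let $(v=v_{0},v_{1},\dots,v_{n})$ be the unique projecting path from $v$ to $\Gamma'$; for each $i<n$ the tail $(v_{i},\dots,v_{n})$ is the unique projecting path from $v_{i}$, so the arc from $v_{i}$ to $v_{i+1}$ lies in $O_{\Gamma'}$, and by confluence it is the only element of $o^{-1}(v_{i})\cap O$, whence $f_{O}(v_{i})=v_{i+1}$. Thus the orbit reaches the fixed point $v_{n}\in V\Gamma'$ and $z_{O}(v)=\{v_{n}\}$; taking the union over all $v$ gives $K(O)=V\Gamma'$, which is conclusion~(a). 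Next suppose $O=O_{\Gamma'}^{+}$ for a cyclic cotree $\Gamma'$ with cyclic orientation $C$. For $v\in V\Gamma'$ the unique arc in $o^{-1}(v)\cap O$ is its $C$-arc, so $f_{O}$ acts on $V\Gamma'$ as the rotation determined by $C$, which, $C$ being a cyclic orientation of the cycle graph $\Gamma'$, is a single periodic orbit equal to $V\Gamma'$; and since the arcs of $C$ originate in $V\Gamma'$, the orbit of any $v\notin V\Gamma'$ follows its unique projecting path into $V\Gamma'$ exactly as in case~(a) and then cycles through $V\Gamma'$. Hence $z_{O}(v)=V\Gamma'$ for every $v$ and $K(O)=V\Gamma'$, which is conclusion~(b). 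Finally suppose $\Gamma$ is a tree and $O=O_{\xi}$; at each vertex $v$ exactly one arc points towards $\xi$, so $f_{O}(v)$ is the neighbour of $v$ on the ray from $v$ to $\xi$, and iterating traces out that ray, whose vertices are pairwise distinct. Thus every orbit is aperiodic with associated end $\xi$, so $z_{O}(v)=\{\xi\}$ and $K(O)=\{\xi\}$, which is conclusion~(c).

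To finish I would check that the three conclusions are mutually exclusive; this simultaneously pins down the data, since $V\Gamma'=K$ determines the induced cotree $\Gamma'$ and $O$ determines the cyclic orientation in case~(b). Conclusion~(c) excludes (a) and (b) because there $K$ consists of a single end and no vertices, whereas in (a) and (b) it is a non-empty set of vertices; and (a) and (b) cannot hold simultaneously, for a vertex $v$ of the common attractor would then satisfy both $o^{-1}(v)\cap O=\emptyset$, by the shape of $O_{\Gamma'}$, and $o^{-1}(v)\cap O\neq\emptyset$, because of the cyclic orientation. The one genuinely substantial ingredient is the scopo trichotomy, which is imported from \cite{RS20}; within the argument above the point that needs care is the claim, in cases (a) and (b), that the $f_{O}$-orbit of a vertex outside $\Gamma'$ runs precisely along the unique projecting path, which rests on combining uniqueness of projecting paths (the cotree property) with confluence to exclude any rival arc of $O$.
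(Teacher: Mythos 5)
The paper does not prove this statement at all: it is quoted verbatim from \cite[Theorem 5.8]{RS20}, so there is no in-paper argument to compare yours against. Taken on its own terms, the part of your argument that you actually carry out is correct and carefully justified: given that $O$ has one of the three forms of Definition~\ref{def:scopo_types}, your computation of $z_{O}(v)$ (fixed points on $V\Gamma'$, orbits running along the unique projecting path --- where your observation that tails of projecting paths are themselves the unique projecting paths from their initial vertices is exactly the right point --- the rotation on a cyclic cotree, and rays towards $\xi$) correctly yields $K=V\Gamma'$ in cases (a) and (b) and $K=\{\xi\}$ in case (c), and your exclusivity check via the attractor is sound.

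The genuine gap is that the whole argument is conditional on the trichotomy ``every scopo of a connected graph is of type (a), (b) or (c)'', which you import from \cite[Section~5]{RS20}. That trichotomy is not an auxiliary fact available in advance: it is precisely the substantive content of the theorem you are asked to prove (the identification of $K$ is the easy supplement), and in \cite{RS20} it is \emph{established as} Theorem 5.8, by the very attractor analysis in question. So what you have proved is the converse implication --- ``if $O$ already has one of the three prescribed forms, then its attractor is as described and the cases are mutually exclusive'' --- not the classification itself. A self-contained proof has to start from an arbitrary scopo $O$ and run the dynamics of $f_{O}$ forwards: one must show, using strong confluence, that either (i) some vertex satisfies $o^{-1}(v)\cap O=\emptyset$, in which case the set of all such vertices induces a cotree $\Gamma'$ with $O=O_{\Gamma'}$ (here one needs that the arcs of $O$ outside $\Gamma'$ assemble into unique projecting paths, which is where the scopo condition does real work); or (ii) every $f_{O}$-orbit is eventually periodic with period at least $2$, in which case the periodic part traverses a cycle whose $O$-arcs form a cyclic orientation of a cyclic cotree; or (iii) every orbit is aperiodic, in which case $\Gamma$ is a tree and all orbits converge to a common end $\xi$ with $O=O_{\xi}$. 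None of this is in your write-up, and you acknowledge as much by calling the trichotomy your ``one genuinely substantial ingredient''; until that ingredient is proved rather than cited from the theorem's own source, the proposal is not a proof of the stated result.
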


\newpage
\section{The action type of a local action diagram}\label{sec:action_type}

It is well-known, likely to be attributed to Tits \cite[Proposition~3.4]{Tit70}, that group actions on trees fall into six mutually exclusive types, see Proposition~\ref{prop:group_types}. We also reference Reid--Smith \cite[Theorem~2.5]{RS26} who include a full proof. In this section, we show how to determine the action type of any group acting on a tree from its local action diagram alone. In particular, the type of a group acting on a tree is a \emph{locally determined global property} in the sense of Reid--Smith \cite[Section~8]{RS26}, i.e., a property that is determined by the ($P$)-closure.

\begin{proposition}[{\cite[Theorem 2.5]{RS26}}]\label{prop:group_types}
	Let $G$ be a group acting on a tree~$T$. Then $G$ belongs to exactly one of the following types: the group $G$ either
	\begin{description}[before={{\renewcommand\makelabel[1]{(\emph{##1})}}}]
		\item[Fixed vertex] fixes a (not necessarily unique) vertex,
		\item[Inversion] preserves a unique edge and contains an inversion of that edge,
		\item[Lineal] fixes exactly two ends and translates the line between them,
		\item[Focal] fixes a unique end and contains a translation towards this end,
		\item[Horocyclic] fixes a unique end but no vertices, and acts without translation, or
		\item[General] acts with translation and does not fix any end. 
	\end{description}
\end{proposition}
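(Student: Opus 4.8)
The plan is to deduce this from the classification of \emph{individual} automorphisms of a tree together with two standard fixed-point lemmas, organising the argument around two successive dichotomies; Property~($P$) plays no role. Recall (Tits; see \cite{Ser80}) that every $g\in\Aut(T)$ is \emph{elliptic} (it fixes a vertex), an \emph{inversion} (it interchanges the endpoints of some edge), or \emph{hyperbolic} (it translates a unique bi-infinite geodesic $A_{g}$, its \emph{axis}, by some $\ell(g)\ge 1$), and that a hyperbolic $g$ fixes exactly the two ends of $A_{g}$. Passing to the barycentric subdivision $T'$ turns every inversion into an elliptic element and is harmless: $G$-invariant vertices, edges and ends of $T$ correspond to $G$-invariant vertices and ends of $T'$, and $G$ has a bounded orbit on $T$ if and only if it has one on $T'$.

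\emph{First dichotomy: does $G$ have a bounded orbit?} If so, that orbit is a bounded subset of the metric realisation of $T'$ and hence has a unique circumcentre, which is $G$-invariant. If the circumcentre is a vertex of $T$ we are in type \emph{Fixed vertex}. Otherwise it is the midpoint of an edge $\{x,y\}$, so $G$ preserves $\{x,y\}$ setwise; if $x$ were fixed then $G$ would fix a vertex, so some element of $G$ sends $x$ to $y$ and thus inverts the edge, whereas if $G$ preserved a second edge it would fix two midpoints of $T'$ and hence the geodesic between them, which meets $VT$ --- a contradiction. So $\{x,y\}$ is the unique $G$-invariant edge and we are in type \emph{Inversion}. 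Conversely, each of the other four types has only unbounded orbits.

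\emph{Second dichotomy (now assuming only unbounded orbits): does $G$ contain a hyperbolic element?} If it does not, then every $g\in G$ is elliptic in $T'$, so its fixed-point set $\mathrm{Fix}_{T'}(g)$ is a non-empty subtree, and any two such subtrees meet, since two elliptic automorphisms with disjoint fixed-point sets have a hyperbolic product (\cite{Ser80}). A family of pairwise-intersecting subtrees of a tree has a common vertex --- excluded here, since that is a bounded orbit --- or, via a Helly-type argument together with a nesting/limiting argument for infinite families, a common end $\xi$; moreover $\xi$ is the \emph{unique} $G$-fixed end, for a second fixed end would force $G$ to fix the line between them pointwise, again a bounded orbit. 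As $G$ contains no translation, this is type \emph{Horocyclic}. If instead $G$ contains a hyperbolic $g$ with $\xi_{\pm}=g^{\pm\infty}$ the ends of $A_{g}$, then every $G$-fixed end is fixed by $g$ and so lies in $\{\xi_{+},\xi_{-}\}$; hence $G$ fixes zero, one, or two ends. Zero ends: type \emph{General}. Both $\xi_{\pm}$: then $G$ preserves the line $A_{g}$ and, both of its ends being fixed, acts on it by translations, nontrivially via $g$ --- type \emph{Lineal}. Exactly one, which after replacing $g$ by $g^{-1}\in G$ we take to be $\xi_{+}$: then $G$ fixes the unique end $\xi_{+}$ and $g$ translates towards it --- type \emph{Focal}.

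Mutual exclusivity is then read off the defining clauses: \emph{Fixed vertex} and \emph{Inversion} are precisely the bounded-orbit cases, separated by whether $G$ fixes a vertex; the remaining four are separated by the number of $G$-fixed ends ($2$, $1$, $1$, $0$) and, between the two single-end cases, by whether $G$ contains a translation towards the fixed end. I expect the only real obstacle to be the two points where non-local-finiteness of $T$ matters --- that a bounded orbit admits a $G$-invariant circumcentre, and that pairwise-intersecting subtrees of a tree share a common vertex or a common end (where the naive leaf-pruning argument breaks down) --- both of which are classical and can either be cited from \cite{Ser80} or re-proved in a few lines on the metric realisation of $T$.
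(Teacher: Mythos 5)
The paper does not actually prove this proposition --- it is quoted verbatim from Serre and from \cite[Theorem 2.5]{RS20} and used as a black box --- so there is no in-paper argument to compare against. Your reconstruction is the standard classical proof and it is correct. The two successive dichotomies (bounded orbit or not; hyperbolic element or not) combined with the elliptic/inversion/hyperbolic trichotomy do yield exhaustiveness, and your exclusivity check is sound: an inversion fixes no vertex and no end, a group containing a hyperbolic element has no bounded orbit, and a hyperbolic element fixes exactly the two ends of its axis, which caps the number of fixed ends at two in the remaining cases. The two ingredients you defer are genuinely the only delicate points, and both survive the absence of local finiteness: for a bounded set of vertices the diameter is an attained integer, so the midpoint of a diametral pair is the unique circumcentre and inherits $G$-invariance; and for a pairwise-intersecting, $G$-invariant family of subtrees with no common vertex, the direction at each vertex $v$ towards a member omitting $v$ is well defined (two members omitting $v$ into different components of $T-v$ would be disjoint), is never reversed along an edge, and is preserved by every member containing $v$, so iterating it produces a single common end, necessarily $G$-invariant. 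One small point worth making explicit if you write this up: in the ``no hyperbolic element'' branch you must also rule out a second fixed end, which you do correctly by observing that a group fixing both ends of a line and containing no hyperbolic element fixes that line pointwise, contradicting the unbounded-orbit hypothesis.
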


Proposition~\ref{prop:group_types} is summarised by the following decision tree.

\begin{figure}[ht]
\centering
\begin{tikzpicture}[
	xscale=1.05,
	yscale=0.75,
	node/.style={%
		draw,
		rectangle,
		align=center,
	},
	]	
	\newdimen\nodeDist
	\nodeDist=35mm	
	\node [node] (A) {\small Fixes a \\ \small vertex?};
	\path (A) ++(-135:\nodeDist) node [] (B) {Fixed Vertex};
	\path (A) ++(-45:\nodeDist) node [node] (C) {\small Preserves \small  an edge?};
	\path (C) ++(-135:\nodeDist) node [] (D) {Inversion};
	\path (C) ++(-45:\nodeDist) node [node] (E) {\small Fixes \\ \small an end?};
	\path (E) ++(-135:\nodeDist) node [node] (F) {\small Fixes more \\ \small than one end?};
	\path (E) ++(-45:\nodeDist) node [] (G) {General};
	\path (F) ++(-135:\nodeDist) node [] (H) {Lineal};
	\path (F) ++(-45:\nodeDist) node [node] (I) {\small Contains \\\small  translations?};
	\path (I) ++(-135:\nodeDist) node [] (J) {Focal};
	\path (I) ++(-45:\nodeDist) node [] (K) {Horocyclic};
		
	\newdimen\labelDist
	\labelDist=0.25pt
	\draw (A) -- (B) node [left,pos=\labelDist,xshift=-0.1cm] {Yes}(A);
	\draw (A) -- (C) node [right,pos=\labelDist,xshift=0.1cm] {No}(A);
	\draw (C) -- (D) node [left,pos=\labelDist,xshift=-0.1cm] {Yes}(A);
	\draw (C) -- (E) node [right,pos=\labelDist,xshift=0.1cm] {No}(A);
	\draw (E) -- (F) node [left,pos=\labelDist,xshift=-0.1cm] {Yes}(A);
	\draw (E) -- (G) node [right,pos=\labelDist,xshift=0.1cm] {No}(A);
	\draw (F) -- (H) node [left,pos=\labelDist,xshift=-0.1cm] {Yes}(A);
	\draw (F) -- (I) node [right,pos=\labelDist,xshift=0.1cm] {No}(A);
	\draw (I) -- (J) node [left,pos=\labelDist,xshift=-0.1cm] {Yes}(A);
	\draw (I) -- (K) node [right,pos=\labelDist,xshift=0.1cm] {No}(A);
\end{tikzpicture}
\caption{Decision tree of Proposition~\ref{prop:group_types}.}%
\label{fig:decision tree}%
\end{figure}

\begin{example}
All six types of Proposition~\ref{prop:group_types} appear among $(P)$-closed groups. We give prototypical examples along with their local action diagrams in the case of the $3$-regular tree $T:=T_{3}$. Let $x\in VT$, $a\in AT$ and $\omega,\omega'\in\partial T$. Recall that the end stabiliser $\Aut(T)_{\omega}$ splits as a semidirect product $\Aut(T)_{\omega}\cong\bbZ\ltimes H$, where $H$ consists of all elements of $\Aut(T)_{\omega}$ that fix a vertex, and $\mathbb{Z}$ is generated by a translation of length~$1$ towards the end $\omega$.

\newdimen\ybase
\ybase=-0.08cm
\centerline{
\begin{tabular}{c|c|c}
	Type & $G$ & $\Delta(T,G)$ \\ \hline
	(\emph{Fixed Vertex}) & $\Aut(T)_{x}$ & 	\begin{tikzpicture}[baseline=(0.base),scale=1.5]
		\node (0) at (0,\ybase) {};	
		\begin{scope}
			[line width=0.6pt, decoration={markings,
				mark=at position 0.53 with {\arrow{>}},
				mark=at position 0.5 with {\node[above]{\small{$\{\!1,\!2,\!3\!\}$}};}
			}]
			\draw [black, postaction=decorate] (0,0) .. controls (1/4,1/4) and (3/4,1/4) .. (1,0);
		\end{scope}
		
		\begin{scope}
			[line width=0.6pt, decoration={markings,
				mark=at position 0.53 with {\arrow{>}},
				mark=at position 0.5 with {\node[below]{\small{$\{\!1\!\}$}};}
			}]
			\draw [black, postaction=decorate] (1,0) .. controls (3/4,-1/4) and (1/4,-1/4) .. (0,0);
		\end{scope}
		
		\begin{scope}
			[line width=0.6pt, decoration={markings,
				mark=at position 0.53 with {\arrow{>}},
				mark=at position 0.5 with {\node[above]{\small{$\{\!2,\!3\!\}$}};}
			}]
			\draw [black, postaction=decorate] (1,0) .. controls (5/4,1/4) and (7/4,1/4) .. (2,0);
		\end{scope}
		
		\begin{scope}
			[line width=0.6pt, decoration={markings,
				mark=at position 0.53 with {\arrow{>}},
				mark=at position 0.5 with {\node[below]{\small{$\{\!1\!\}$}};}
			}]
			\draw [black, postaction=decorate] (2,0) .. controls (7/4,-1/4) and (5/4,-1/4) .. (1,0);
		\end{scope}
		
		\begin{scope}
			[line width=0.6pt, line width=0.6pt, decoration={markings,
				mark=at position 0.53 with {\arrow{>}},
				mark=at position 0.5 with {\node[above]{\small{$\{\!2,\!3\!\}$}};}
			}]
			\draw [black, postaction=decorate] (2,0) .. controls (9/4,1/4) and (11/4,1/4) .. (3,0);
		\end{scope}
		
		\begin{scope}
			[line width=0.6pt, line width=0.6pt, decoration={markings,
				mark=at position 0.53 with {\arrow{>}},
				mark=at position 0.5 with {\node[below]{\small{$\{\!1\!\}$}};}
			}]
			\draw [black, postaction=decorate] (3,0) .. controls (11/4,-1/4) and (9/4,-1/4) .. (2,0);
		\end{scope}
		
		\draw [black, dashed] (3,0) .. controls (13/4,1/4) and (15/4,1/4) .. (4,0);
		\draw [black, dashed] (3,0) .. controls (13/4,-1/4) and (15/4,-1/4) .. (4,0);
		
		\draw [color=white, fill=white] (4,0.5) rectangle (3.5,-0.5);
		
		\node (1) at (0,0) {};
		\draw [fill] (1) circle [radius=1.25pt];
		\node [below] at (1) {\small{$S_{3}$}};
		
		\node (2) at (1,0) {};
		\draw [fill] (2) circle [radius=1.25pt];
		\node [below] at (2) {\small{$C_{2}$}};
		
		\node (3) at (2,0) {};
		\draw [fill] (3) circle [radius=1.25pt];
		\node [below] at (3) {\small{$C_{2}$}};
		
		\node (4) at (3,0) {};
		\draw [fill] (4) circle [radius=1.25pt];
		\node [below] at (4) {\small{$C_{2}$}};
	\end{tikzpicture} \\
	(\emph{Inversion}) & $\Aut(T)_{\{a,\overline{a}\}}$ & \begin{tikzpicture}[baseline=(0.base), scale=1.5]
		\node (0) at (1,\ybase) {};
		
		\begin{scope}
			[line width=0.6pt, line width=0.6pt, decoration={markings,
				mark=at position 0.5 with {\node[label={[black, label distance=-2mm]{180}:\small{$\{\!1\!\}$}}]{};}					
			}]
			\draw [black, postaction=decorate] (0,0) .. controls (135:0.9) and (225:0.9) .. (0,0);
		\end{scope}
			
		\begin{scope}
			[line width=0.6pt, line width=0.6pt, decoration={markings,
				mark=at position 0.53 with {\arrow{>}},
				mark=at position 0.5 with {\node[above]{\small{$\{\!2,\!3\!\}$}};}
			}]
			\draw [black, postaction=decorate] (0,0) .. controls (1/4,1/4) and (3/4,1/4) .. (1,0);
		\end{scope}
		
		\begin{scope}
			[line width=0.6pt, line width=0.6pt, decoration={markings,
				mark=at position 0.53 with {\arrow{>}},
				mark=at position 0.5 with {\node[below]{\small{$\{\!1\!\}$}};}
			}]
			\draw [black, postaction=decorate] (1,0) .. controls (3/4,-1/4) and (1/4,-1/4) .. (0,0);
		\end{scope}
		
		\begin{scope}
			[line width=0.6pt, line width=0.6pt, decoration={markings,
				mark=at position 0.53 with {\arrow{>}},
				mark=at position 0.5 with {\node[above]{\small{$\{\!2,\!3\!\}$}};}
			}]
			\draw [black, postaction=decorate] (1,0) .. controls (5/4,1/4) and (7/4,1/4) .. (2,0);
		\end{scope}
		
		\begin{scope}
			[line width=0.6pt, line width=0.6pt, decoration={markings,
				mark=at position 0.53 with {\arrow{>}},
				mark=at position 0.5 with {\node[below]{\small{$\{\!1\!\}$}};}
			}]
			\draw [black, postaction=decorate] (2,0) .. controls (7/4,-1/4) and (5/4,-1/4) .. (1,0);
		\end{scope}
		
		\begin{scope}
			[line width=0.6pt, line width=0.6pt, decoration={markings,
				mark=at position 0.53 with {\arrow{>}},
				mark=at position 0.5 with {\node[above]{\small{$\{\!2,\!3\!\}$}};}
			}]
			\draw [black, postaction=decorate] (2,0) .. controls (9/4,1/4) and (11/4,1/4) .. (3,0);
		\end{scope}
		
		\begin{scope}
			[line width=0.6pt, line width=0.6pt, decoration={markings,
				mark=at position 0.53 with {\arrow{>}},
				mark=at position 0.5 with {\node[below]{\small{$\{\!1\!\}$}};}
			}]
			\draw [black, postaction=decorate] (3,0) .. controls (11/4,-1/4) and (9/4,-1/4) .. (2,0);
		\end{scope}
		
		\draw [black, dashed] (3,0) .. controls (13/4,1/4) and (15/4,1/4) .. (4,0);
		\draw [black, dashed] (3,0) .. controls (13/4,-1/4) and (15/4,-1/4) .. (4,0);
		
		\draw [color=white, fill=white] (4,0.5) rectangle (3.5,-0.5);
		
		\node (1) at (0,0) {};
		\draw [fill] (1) circle [radius=1.25pt];
		\node [below] at (1) {\small{$C_{2}$}};
		
		\node (2) at (1,0) {};
		\draw [fill] (2) circle [radius=1.25pt];
		\node [below] at (2) {\small{$C_{2}$}};
		
		\node (3) at (2,0) {};
		\draw [fill] (3) circle [radius=1.25pt];
		\node [below] at (3) {\small{$C_{2}$}};
		
		\node (4) at (3,0) {};
		\draw [fill] (4) circle [radius=1.25pt];
		\node [below] at (4) {\small{$C_{2}$}};		
	\end{tikzpicture} \\
	(\emph{Lineal}) & $\Aut(T)_{\omega,\omega'}$ & \begin{tikzpicture}[baseline=(0.base), scale=1.5]
		\node (0) at (0,\ybase) {};	
		\begin{scope}
			[line width=0.6pt, line width=0.6pt, decoration={markings,
				mark=at position 0.5 with {\node[label={[black, label distance=-2mm]{180}:\small{$\{\!1\!\}$}}]{};}					
			}]
			\draw [black, postaction=decorate] (0,0) .. controls (135:0.9) and (180:0.9) .. (0,0);
		\end{scope}
		
		\begin{scope}
			[line width=0.6pt, line width=0.6pt, decoration={markings,
				mark=at position 0.5 with {\node[label={[black, label distance=-2mm]{180}:\small{$\{\!2\!\}$}}]{};}					
			}]
			\draw [black, postaction=decorate] (0,0) .. controls (180:0.9) and (225:0.9) .. (0,0);
		\end{scope}		
		
		\begin{scope}
			[line width=0.6pt, line width=0.6pt, decoration={markings,
				mark=at position 0.53 with {\arrow{>}},
				mark=at position 0.5 with {\node[above]{\small{$\{\!3\!\}$}};}
			}]
			\draw [black, postaction=decorate] (0,0) .. controls (1/4,1/4) and (3/4,1/4) .. (1,0);
		\end{scope}
		
		\begin{scope}
			[line width=0.6pt, line width=0.6pt, decoration={markings,
				mark=at position 0.53 with {\arrow{>}},
				mark=at position 0.5 with {\node[below]{\small{$\{\!1\!\}$}};}
			}]
			\draw [black, postaction=decorate] (1,0) .. controls (3/4,-1/4) and (1/4,-1/4) .. (0,0);
		\end{scope}
		
		\begin{scope}
			[line width=0.6pt, line width=0.6pt, decoration={markings,
				mark=at position 0.53 with {\arrow{>}},
				mark=at position 0.5 with {\node[above]{\small{$\{\!2,\!3\!\}$}};}
			}]
			\draw [black, postaction=decorate] (1,0) .. controls (5/4,1/4) and (7/4,1/4) .. (2,0);
		\end{scope}
		
		\begin{scope}
			[line width=0.6pt, line width=0.6pt, decoration={markings,
				mark=at position 0.53 with {\arrow{>}},
				mark=at position 0.5 with {\node[below]{\small{$\{\!1\!\}$}};}
			}]
			\draw [black, postaction=decorate] (2,0) .. controls (7/4,-1/4) and (5/4,-1/4) .. (1,0);
		\end{scope}
		
		\begin{scope}
			[line width=0.6pt, line width=0.6pt, decoration={markings,
				mark=at position 0.53 with {\arrow{>}},
				mark=at position 0.5 with {\node[above]{\small{$\{\!2,\!3\!\}$}};}
			}]
			\draw [black, postaction=decorate] (2,0) .. controls (9/4,1/4) and (11/4,1/4) .. (3,0);
		\end{scope}
		
		\begin{scope}
			[line width=0.6pt, line width=0.6pt, decoration={markings,
				mark=at position 0.53 with {\arrow{>}},
				mark=at position 0.5 with {\node[below]{\small{$\{\!1\!\}$}};}
			}]
			\draw [black, postaction=decorate] (3,0) .. controls (11/4,-1/4) and (9/4,-1/4) .. (2,0);
		\end{scope}
		
		\draw [black, dashed] (3,0) .. controls (13/4,1/4) and (15/4,1/4) .. (4,0);
		\draw [black, dashed] (3,0) .. controls (13/4,-1/4) and (15/4,-1/4) .. (4,0);
		
		\draw [color=white, fill=white] (4,0.5) rectangle (3.5,-0.5);
		
		\node (1) at (0,0) {};
		\draw [fill] (1) circle [radius=1.25pt];
		\node [below] at (1) {\small{$1$}};
		
		\node (2) at (1,0) {};
		\draw [fill] (2) circle [radius=1.25pt];
		\node [below] at (2) {\small{$C_{2}$}};
		
		\node (3) at (2,0) {};
		\draw [fill] (3) circle [radius=1.25pt];
		\node [below] at (3) {\small{$C_{2}$}};	
		
		\node (4) at (3,0) {};
		\draw [fill] (4) circle [radius=1.25pt];
		\node [below] at (4) {\small{$C_{2}$}};			
	\end{tikzpicture} \\
	(\emph{Focal}) & $\Aut(T)_{\omega}$ & \begin{tikzpicture}[baseline=(0.base), scale=1.5]
		\node (0) at (0,\ybase) {};
		\begin{scope}
			[line width=0.6pt, line width=0.6pt, decoration={markings,
				mark=at position 0.5 with {\node[label={[black, label distance=-2mm]{180}:\small{$\{\!1\!\}$}}]{};}					
			}]
			\draw [black, postaction=decorate] (0,0) .. controls (135:0.9) and (225:0.9) .. (0,0);
		\end{scope}
		
		\begin{scope}
			[line width=0.6pt, line width=0.6pt, decoration={markings,
				mark=at position 0.5 with {\node[label={[black, label distance=-2mm]{0}:\small{$\{\!2,\!3\!\}$}}]{};}					
			}]
			\draw [black, postaction=decorate] (0,0) .. controls (-45:0.9) and (45:0.9) .. (0,0);
		\end{scope}
		
		\node (1) at (0,0) {};
		\draw [fill] (1) circle [radius=1.25pt];
		\node [below] at (1) {\small{$C_{2}$}};		
	\end{tikzpicture} \\
	(\emph{Horocyclic}) & $H$ & \begin{tikzpicture}[baseline=(0.base), scale=1.5]
		\node (0) at (0,\ybase) {};	
		\begin{scope}
			[line width=0.6pt, line width=0.6pt, decoration={markings,
				mark=at position 0.53 with {\arrow{>}},
				mark=at position 0.5 with {\node[above]{\small{$\{\!2,\!3\!\}$}};}
			}]
			\draw [black, postaction=decorate] (0,0) .. controls (1/4,1/4) and (3/4,1/4) .. (1,0);
		\end{scope}
		
		\begin{scope}
			[line width=0.6pt, line width=0.6pt, decoration={markings,
				mark=at position 0.53 with {\arrow{>}},
				mark=at position 0.5 with {\node[below]{\small{$\{\!1\!\}$}};}
			}]
			\draw [black, postaction=decorate] (1,0) .. controls (3/4,-1/4) and (1/4,-1/4) .. (0,0);
		\end{scope}
		
		\begin{scope}
			[line width=0.6pt, line width=0.6pt, decoration={markings,
				mark=at position 0.53 with {\arrow{>}},
				mark=at position 0.5 with {\node[above]{\small{$\{\!2,\!3\!\}$}};}
			}]
			\draw [black, postaction=decorate] (1,0) .. controls (5/4,1/4) and (7/4,1/4) .. (2,0);
		\end{scope}
		
		\begin{scope}
			[line width=0.6pt, line width=0.6pt, decoration={markings,
				mark=at position 0.53 with {\arrow{>}},
				mark=at position 0.5 with {\node[below]{\small{$\{\!1\!\}$}};}
			}]
			\draw [black, postaction=decorate] (2,0) .. controls (7/4,-1/4) and (5/4,-1/4) .. (1,0);
		\end{scope}
		
		\draw [black, dashed] (2,0) .. controls (9/4,1/4) and (11/4,1/4) .. (3,0);
		\draw [black, dashed] (2,0) .. controls (9/4,-1/4) and (11/4,-1/4) .. (3,0);
		
		\draw [color=white, fill=white] (3,0.5) rectangle (2.5,-0.5);
		
		\draw [black, dashed] (0,0) .. controls (-1/4,1/4) and (-3/4,1/4) .. (-1,0);
		\draw [black, dashed] (0,0) .. controls (-1/4,-1/4) and (-3/4,-1/4) .. (-1,0);
		
		\draw [color=white, fill=white] (-1,-0.5) rectangle (-0.5,0.5);
		
		\node (1) at (0,0) {};
		\draw [fill] (1) circle [radius=1.25pt];
		\node [below] at (1) {\small{$C_{2}$}};
		
		\node (2) at (1,0) {};
		\draw [fill] (2) circle [radius=1.25pt];
		\node [below] at (2) {\small{$C_{2}$}};
		
		\node (3) at (2,0) {};
		\draw [fill] (3) circle [radius=1.25pt];
		\node [below] at (3) {\small{$C_{2}$}};		
	\end{tikzpicture} \\
	(\emph{General}) & $\Aut(T)$ & \begin{tikzpicture}[baseline=(0.base), scale=1.5]
		\node (0) at (0,\ybase) {};
		\begin{scope}
			[line width=0.6pt, line width=0.6pt, decoration={markings,
				mark=at position 0.5 with {\node[label={[black, label distance=-2mm]{0}:\small{$\{\!1,\!2,\!3\!\}$}}]{};}					
			}]
			\draw [black, postaction=decorate] (0,0) .. controls (-45:0.9) and (45:0.9) .. (0,0);
		\end{scope}
		
		\node (1) at (0,0) {};
		\draw [fill] (1) circle [radius=1.25pt];
		\node [below] at (1) {\small{$S_{3}$}};		
	\end{tikzpicture}
\end{tabular}
}
\end{example}

Given a tree $T$ and $G\le\Aut(T)$, let $\Type(G)$ denote the action type of $G$. We~show that $\smash{\Type(G)=\Type(\overline{G})=\Type\!\big(G^{(P_{k})}\big)}$ for all $k\in\mathbb{N}$. In particular, $\Type(G)$ is a locally determined global property.

\begin{lemma}\label{lem:p_closure_end_fix}
Let $T$ be a tree, $G\!\le\!\Aut(T)$ and $\omega\!\in\!\partial T$. If $G_{\omega}=G$ then $G^{(P)}_{\omega}=G^{(P)}$.
\end{lemma}

\begin{proof}
Suppose that $G^{(P)}$ does not fix $\omega$. Then there are $a\in AT$ and $h\in G^{(P)}$ such that $a$ is oriented towards $\omega$ but $ha$ is not. By definition of $G^{(P)}$, there is $g\in G$ such that $ha = ga$. Hence $ga$ is not oriented towards $\omega$. That is, $g$ does not fix $\omega$, in contradiction to the assumption.
\end{proof}

\begin{proposition}\label{prop:type_locally_determined}
Let $T$ be a tree and let $G\le\Aut(T)$. Then for all $k\in\mathbb{N}$ we have $\smash{\Type(G)=\Type(\overline{G})=\Type\!\big(G^{(P_{k})}\big)}$.
\end{proposition}	

\begin{proof}
It suffices to show that $\Type(G)=\Type(G^{(P)})$. Then for $k\ge 2$ we have 
\begin{displaymath}
	\Type\!\big(G^{(P_{k})}\big) = \Type\!\left(\!\big(G^{(P_{k})}\big)^{(P)}\right) = \Type\!\big(G^{(P)}\big) = \Type(G)
\end{displaymath}
by \cite[Proposition 3.4 (iv)]{BEW15}. Furthermore, since $G \leq \overline{G} \leq G^{(P)}$ the same result implies $\smash{G^{(P)} \leq \overline{G}^{\scaleto{ (P)}{5pt}} \leq \big(G^{(P)}\big)^{\hspace{-1.5pt}\scaleto{(P)}{5pt}} = G^{(P)}}$ and hence
\begin{displaymath}
	\Type(\overline{G}) = \Type\!\left(\overline{G}^{(P)}\right) = \Type\!\big(G^{(P)}\big) = \Type(G).
\end{displaymath}
		
Suppose now that $G$ is of type (\emph{Fixed vertex}), fixing $v \in VT$. Then for all $\smash{h \in G^{(P)}}$ we have that $hv=gv$ for some $g\in G$. Since $gv=v$ we have that $hv=v$ and conclude that $\smash{G^{(P)}}$ is of type (\emph{Fixed vertex}) as well. 
	
Next, suppose that $G$ is of type (\emph{Inversion}), inverting the arc $a\in AT$. Then for all $h\in G^{(P)}$ we have that $ha = ga$ for some $g \in G$. Since $ga\in\{a,\overline{a}\}$ we have that $ha\in\{a,\overline{a}\}$. Since $\smash{G\le G^{(P)}}$, the group $\smash{G^{(P)}}$ still inverts $a$. We conclude that $G^{(P)}$ is of type (\emph{Inversion}) as well. 
	
If $G$ is of type (\emph{Lineal}), fixing $\omega,\omega'\in\partial T$, then $\smash{G^{(P)}}$ also fixes $\omega,\omega'\in\partial T$ by Lemma~\ref{lem:p_closure_end_fix}. Since $\smash{G\le G^{(P)}}$, the group $\smash{G^{(P)}}$ still translates between $\omega$ and $\omega'$. Hence it does not fix any additional ends. We conclude that $G^{(P)}$ is of type \emph{(Lineal)}. 
	
Suppose now that $G$ is of type (\emph{Focal}), fixing $\omega\in\partial T$. Then $\smash{G^{(P)}}$ also fixes $\omega\in\partial T$ by Lemma~\ref{lem:p_closure_end_fix}. Since $\smash{G\le G^{(P)}}$, the group $\smash{G^{(P)}}$ still includes translations towards $\omega\in\partial T$. Moreover, the group $\smash{G^{(P)}}$ cannot fix any additional end as then $G$ would be of type (\emph{Lineal}). Hence $G^{(P)}$ is of type (\emph{Focal}) as well.
	
Next, suppose that $G$ is of type (\emph{Horocyclic}), fixing $\omega\in\partial T$. Then $\smash{G^{(P)}}$ also fixes $\omega$ by Lemma~\ref{lem:p_closure_end_fix}. Moreover, the group $\smash{G^{(P)}}$ cannot fix another end as then $G$ would be of type (\emph{Lineal}). Since $\smash{G\le G^{(P)}}$, the group $\smash{G^{(P)}}$ also cannot fix a vertex as otherwise $G$ would be of type (\emph{Fixed vertex}). Also, if $\smash{G^{(P)}}$ included a translation $h$ towards $\omega\in\partial T$ then for any arc $a$ on the axis of $h$ there would be $g\in G$ such that $ha=ga$. Hence the arcs $a$ and $ga$ are co-oriented, implying that $g$ is a translation. This contradicts $G$ being of type (\emph{Horocyclic}). Overall, $\smash{G^{(P)}}$ is of type (\emph{Horocyclic}).
	

Finally, if $G$ is of type (\emph{General}), acting with translations and fixing no end, the same is true for $\smash{G^{(P)}}$ as $\smash{G\le G^{(P)}}$. Hence $\smash{G^{(P)}}$ is of type (\emph{General}) as well.
\end{proof}

The group action types discussed above rely on invariant structure --- vertices, edges and ends --- which can be detected from scopos of the associated local action diagram using the following result, whose proof is not made explicit in \cite{RS26}.

\begin{proposition}\label{prop:scopo_structure_correspondence}
Let $\Delta\!=\!(\Gamma, (X_{a}), (G(v)))$ be a local action diagram, $\mathbf{T}\!=\!(T, \pi, \calL)$ a $\Delta$-tree, $G:=\mathbf U(\mathbf{T},(G(v)))$ and $O$ a scopo of $\Delta$.
\begin{enumerate}[(i)]
	\item If $O=O_{\Gamma'}$ is of type (a) then $T':=\pi^{-1}(\Gamma')$ is a subtree of $T$ such that $K(\pi^{-1}(O))=VT'$, and $ T'$ is a $G$-invariant subtree of $ T$. 
	\item If $O$ is of type (b) then there is an end $\xi\in\partial T$ such that $K(\pi^{-1}(O)) = \{\xi\}$, the end $\xi$ is $G$-invariant, and $G$ contains a translation towards $\xi$. 
	\item If $O$ is of type (c) then there is an end $\xi\in\partial T$ such that $K(\pi^{-1}(O)) = \{\xi\}$, the end $\xi$ is $G$-invariant, and $G$ contains no translation towards $\xi$. 
\end{enumerate}
\end{proposition}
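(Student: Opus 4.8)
The plan is to transfer the question to the tree $T$ by passing to the scopo $\pi^{-1}(O)$, whose attractor is described by Theorem~\ref{thm:attractor}. First I would record two elementary facts. (1) The set $\pi^{-1}(O)$ is a $G$-invariant scopo of $T$: $G$-invariance holds because $\pi\circ g=\pi$ for every $g\in G\le\Aut_\pi(T)$, and it is a scopo because for each $\tilde v\in VT$ the colouring $\calL$ restricts to a bijection $o^{-1}(\tilde v)\to X_{\pi(\tilde v)}$, hence $\pi$ restricts to a bijection $o^{-1}(\tilde v)\to o^{-1}(\pi(\tilde v))$, and this transports the confluence and strong-confluence conditions of $O$ back to $\pi^{-1}(O)$ (the colour-size condition $|X_a|=1$ is vacuous in $T$). (2) The maps $f$ are semi-conjugate: $\pi\circ f_{\pi^{-1}(O)}=f_O\circ\pi$, since this bijection matches the unique out-arc of $O$ at $\pi(\tilde v)$ with the unique out-arc of $\pi^{-1}(O)$ at $\tilde v$, whenever these exist. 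Consequently every $f_{\pi^{-1}(O)}$-orbit projects to an $f_O$-orbit, and I will read off $K(\pi^{-1}(O))$ from the type of $O$ and then apply Theorem~\ref{thm:attractor} inside the tree $T$.

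If $O=O_{\Gamma'}$ is of type (a), then by Theorem~\ref{thm:attractor} $K(O)=V\Gamma'$, and since $o^{-1}(v)\cap O=\emptyset$ for $v\in V\Gamma'$, every $f_O$-orbit is eventually constant at a vertex of $\Gamma'$. By the semi-conjugacy, every $f_{\pi^{-1}(O)}$-orbit eventually lands in $\pi^{-1}(V\Gamma')=VT'$ and stays there (as $o^{-1}(\tilde v)\cap\pi^{-1}(O)=\emptyset$ once $\pi(\tilde v)\in V\Gamma'$), so $K(\pi^{-1}(O))=VT'$. Since $T$ is a tree, Theorem~\ref{thm:attractor} applied to the scopo $\pi^{-1}(O)$ can only be of type (a), so the induced subgraph of $T$ on $VT'$ — which is precisely $\pi^{-1}(\Gamma')$ because $\Gamma'$ is induced in $\Gamma$ — is a cotree of $T$, hence a subtree $T'$ with $\pi^{-1}(O)=O_{T'}$. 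Finally $T'$ is $G$-invariant because $g(\pi^{-1}(\Gamma'))=\pi^{-1}(\Gamma')$ for every $g\in G$, again by $\pi\circ g=\pi$.

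In types (b) and (c) I would first show that $f_{\pi^{-1}(O)}$ has no periodic orbit. In a tree a scopo has no directed cycle of positive length ($\pi^{-1}(O)\cap\overline{\pi^{-1}(O)}=\emptyset$ precludes backtracking, and a non-backtracking directed cycle cannot exist in a tree), so a periodic orbit of $f_{\pi^{-1}(O)}$ would have to be a fixed vertex $\tilde v$, i.e.\ $o^{-1}(\tilde v)\cap\pi^{-1}(O)=\emptyset$, i.e.\ $o^{-1}(\pi(\tilde v))\cap O=\emptyset$; but in type (b), $O=O_{\Gamma'}^{+}$ meets $o^{-1}(u)$ for every $u\in V\Gamma$ (via the projecting path if $u\notin V\Gamma'$, via the cyclic orientation if $u\in V\Gamma'$), and in type (c), $O=O_\xi$ meets $o^{-1}(u)$ for every $u\in V\Gamma$ (a vertex of the tree $\Gamma$ has exactly one arc towards a given end), so no such $\tilde v$ exists. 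Hence every $f_{\pi^{-1}(O)}$-orbit is aperiodic, $K(\pi^{-1}(O))$ is a nonempty set of ends, and Theorem~\ref{thm:attractor} applied in the tree $T$ forces $K(\pi^{-1}(O))=\{\xi\}$ for a unique end $\xi$ of $T$ with $\pi^{-1}(O)=O_\xi$. Then $\xi$ is $G$-invariant, since $g\cdot O_\xi=O_{g\xi}$ while $g\cdot\pi^{-1}(O)=\pi^{-1}(O)$ and $\xi\mapsto O_\xi$ is injective.

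It remains to separate (b) from (c) by the translation condition. For (c), $\Gamma$ is a tree, hence has no positive-length non-backtracking closed walk; but a translation $g\in G$ would have a geodesic axis $A_g$ in $T$ with $gA_g=A_g$, and since $\pi\circ g=\pi$ the restriction $\pi|_{A_g}$ would be periodic of positive period, producing exactly such a walk in $\Gamma$ — a contradiction, so $G$ contains no translation at all, in particular none towards $\xi$. For (b) I construct one: since $O=O_{\Gamma'}^{+}$, the cyclic cotree $\Gamma'$ is a cycle graph, say of order $n$, whose cyclic-orientation arcs carry singleton colour sets, and this forces $\pi^{-1}(\Gamma')$ to contain a geodesic line $L$ mapping onto $\Gamma'$ whose forward arcs lie in $\pi^{-1}(O')\subseteq\pi^{-1}(O)=O_\xi$, so that the forward end of $L$ is $\xi$; writing $\tilde v_0,\tilde v_1,\dots$ for the forward ray of $L$ and noting $\pi(\tilde v_n)=\pi(\tilde v_0)$, I take the $\calL$-preserving automorphism $g$ with $g\tilde v_0=\tilde v_n$ (built by extending $\calL$-preservingly over balls around $\tilde v_0$; it lies in $\mathbf U(\mathbf T,(G(v)))$ because $\id\in G(v)$ for all $v$). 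Since the colours along the forward direction of $L$ are $n$-periodic and $\calL$-preservation determines $g$ on out-neighbourhoods, one gets $g\tilde v_i=\tilde v_{i+n}$ for all $i\ge 0$, so $g$ shifts $(\tilde v_i)_{i\ge 0}$ strictly towards $\xi$, fixes $\xi$ and no vertex on that ray, and is therefore a translation towards $\xi$. The main obstacle is exactly this last step — producing the line $L$ inside $\pi^{-1}(\Gamma')$ (with care for the degenerate cycle graphs of order $1$ and $2$, where "cyclic orientation" behaves slightly unusually) and checking that the $\calL$-preserving lift of the shift on $L$ really is a hyperbolic element of the universal group rather than merely an end-fixing automorphism; everything else is bookkeeping with the semi-conjugacy $\pi\circ f_{\pi^{-1}(O)}=f_O\circ\pi$ and Theorem~\ref{thm:attractor}.
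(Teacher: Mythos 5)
Your proof follows essentially the same route as the paper's: pull the scopo back to $T$, use the semi-conjugacy $\pi\circ f_{\pi^{-1}(O)}=f_{O}\circ\pi$ to compute the attractor, apply Theorem~\ref{thm:attractor} inside the tree $T$, and then settle the translation question separately in types (b) and (c). The only cosmetic differences are that the paper obtains $G$-invariance by citing Lemma~\ref{lem:scopo_preimage} rather than directly from $\pi\circ g=\pi$, and that in type (b) it simply picks a line over the cycle coloured periodically of period $n$ and asserts the translation exists ``by definition''; your more explicit construction of the $\calL$-preserving lift is the same idea spelled out.

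One step needs tightening. In type (c) you assert that the $\pi$-image of a fundamental segment of the translation axis is a \emph{non-backtracking} closed walk in $\Gamma$. That is not automatic: two distinct arcs issuing from a vertex of $T$ can have the same $\pi$-image whenever the corresponding colour set has size at least $2$, so the image of a geodesic can backtrack in general. It does not happen here, but for a reason you should state: every arc of the axis oriented towards $\xi$ lies in $O_{\xi}=\pi^{-1}(O)$, so all the image arcs (taken with that orientation) lie in $O$, and $O\cap\overline{O}=\emptyset$ forbids $\pi(a_{i+1})=\overline{\pi(a_{i})}$. (The paper sidesteps this by observing instead that $\pi$ maps the forward ray of the axis onto the ray in the tree $\Gamma$ towards the end, hence injectively on vertices, so no two vertices of that ray lie in the same fibre and therefore not in the same $G$-orbit, contradicting the existence of a translation.) With that one line added, your argument is complete.
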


\begin{proof}
Suppose that $O=O_{\Gamma'}$ is of type (a) for a cotree $\Gamma'$ of $\Gamma$. Let $T':=\pi^{-1}(\Gamma')$. For all $a \in A\Gamma\backslash A\Gamma'$ we have that either $a\in O$ or $\overline{a}\in O$, and for all $a\in A\Gamma'$ we have that $a, \overline{a} \notin O$. Hence $\pi^{-1}(O) \cap AT'=\varnothing$. Moreover, every $a\in\pi^{-1}(O)$ is oriented towards $T'$. Hence, $f_{\pi^{-1}(O)}(v)=v$ for all $v\in VT'$ and whenever $v\notin VT'$ the sequence $\smash{(v, f_{\pi^{-1}(O)}(v), f_{\pi^{-1}(O)}^{2}(v), \dots)}$ is eventually constant equal to a vertex of $T'$. So $K(\pi^{-1}(O))=VT'$. By Theorem~\ref{thm:attractor}, the tree $T'$ is a cotree of $T$ and $\pi^{-1}(O)=O_{T'}$. As $\pi^{-1}(O)$ is $G$-invariant by Lemma~\ref{lem:scopo_preimage},~so~is~$T'$.
	
Now suppose that $O=O_{\Gamma'}^{+}$ is of type (b). In particular, $O$ is a full orientation of $\Gamma$ and the type of $O$ implies that for all $v\in V\Gamma$ there is a unique $a \in o^{-1}(v)$ with $a \in O$. Since $|X_{a}|=1$ for all $a\in O$, we conclude that for all $\tilde{v} \in VT$ there is a unique $\tilde{a} \in o^{-1}(\tilde{v})$ such that $\tilde{a }\in \pi^{-1}(O)$. Therefore, $f_{\pi^{-1}(O)}(\tilde{v}) = t(\tilde{a})$ for all $\tilde{v} \in VT$ where $\smash{\tilde{a} \in o^{-1}(\tilde{v}) \cap \pi^{-1}(O)}$. Hence the sequence $\smash{(\tilde{v}, f_{\pi^{-1}(O)}(\tilde{v}),f_{\pi^{-1}(O)}^{2}(\tilde{v}), \ldots)}$ is aperiodic for all $\tilde{v}\in VT$ and by Theorem~\ref{thm:attractor} there is an end $\xi$ of $ T$ such that $K(\pi^{-1}(O)) = \{\xi\}$ and $\pi^{-1}(O)=O_{\xi}$. As $O_{\xi}$ is $G$-invariant by Lemma~\ref{lem:scopo_preimage}, so is $\xi$.
	
Now consider the cycle $C$ included in the cotree $\Gamma'$ of $\Delta$. Say $C$ has length $n\in\mathbb{N}$. Let $v\in VC$. Pick $\tilde{v}\in\pi^{-1}(v)$ as well as a line $L$ in $T$ that contains $\tilde{v}$, projects onto $C$ and is coloured periodically of period $n$ in both directions. Then, by definition, $G$ contains a translation of length $n$ towards $\xi$ along $L$ with trivial local actions.
	
Finally, suppose that $O=O_{\xi}$ is of type (c). In particular, $O$ is a full orientation of $\Gamma$, and the type of $O$ implies that for all $v\in V\Gamma$ there is a unique $a \in o^{-1}(v)$ with $a \in O$. Since $|X_{a}|=1$ for all $a\in O$, we conclude that for all $\tilde{v} \in VT$ there is a unique $\tilde{a} \in o^{-1}(\tilde{v})$ such that $\tilde{a }\in \pi^{-1}(O)$. Therefore, $f_{\pi^{-1}(O)}(\tilde{v}) = t(\tilde{a})$ for all $\tilde{v} \in VT$ where $\smash{\tilde{a} \in o^{-1}(\tilde{v}) \cap \pi^{-1}(O)}$. Hence the sequence $\smash{(\tilde{v}, f_{\pi^{-1}(O)}(\tilde{v}),f_{\pi^{-1}(O)}^{2}(\tilde{v}), \ldots)}$ is aperiodic for all $\tilde{v}\in VT$ and by Theorem~\ref{thm:attractor} there is an end $\omega$ of $ T$ such that $K(\pi^{-1}(O)) = \{\omega\}$ and $\pi^{-1}(O)=O_{\omega}$. As $O_{\omega}$ is $G$-invariant by Lemma~\ref{lem:scopo_preimage}, so is~$\omega$.

Suppose $g\in G$ is a translation towards $\omega$. Let $v\in VT$ belong to its translation axis. The ray $R$ starting at $v$ that is oriented towards $\omega$ maps onto the ray in $\Gamma$ that starts at $\pi(v)$ and is oriented towards $\xi$. In particular, no two vertices along $R$ belong to the same $G$-orbit, contradicting the existence of $g$.
\end{proof}

Using Proposition~\ref{prop:scopo_structure_correspondence} we now prove the following characterisation of a $(P)$-closed group's action type in terms of its local action diagram. This statement is included in \cite[Section 5]{RS26} but its proof is not made explicit.

\begin{theorem}\label{thm:lad_group_types}
Let $\Delta = (\Gamma, (G(v)), (X_{a}))$ be a local action diagram, $\mathbf{T} = (T, \pi, \calL)$ a $\Delta$-tree and $G:=\mathbf U(\mathbf{T},(G(v)))\le\Aut_{\pi}(T)$. Then $G$ is of type
\begin{description}[before={{\renewcommand\makelabel[1]{(\emph{##1})}}}]
	\item[Fixed vertex] if and only if $\Gamma$ is a tree and $\Delta$ contains a single vertex cotree. \\ The vertices of $T$ fixed by $G$ correspond to the single vertex cotrees of $\Delta$.
	\item[Inversion] if and only if $\Delta$ contains a (necessarily unique) cotree consisting of a vertex with a non-orientable loop $a\in A\Gamma$ so that $|X_{a}| = 1$. \\ The unique edge of $T$ inverted by $G$ corresponds to this cotree.
	\item[Lineal] if and only if $\Delta$ contains a cyclic cotree $\Gamma'$ with $|X_{a}| = 1$ for all $a \in A\Gamma'$.
	The two ends of $T$ fixed by $G$ correspond to the cyclic orientations of $\Gamma'$.
	\item[Focal] if and only if $\Delta$ contains a cyclic cotree $\Gamma'$ with a cyclic orientation $O\subseteq A\Gamma'$ so that $|X_{a}| = 1$ for all $a \in O$ but there is an $a \in A(\Gamma') \backslash O$ with $|X_{a}| \geq 2$. The unique end of $T$ fixed by $G$ corresponds to the orientation $O$.
	\item[Horocyclic] if and only if $\Gamma$ is a tree and $\Delta$ has a unique horocyclic end. \\ The unique end of $T$ fixed by $G$ corresponds to this horocyclic end.
	\item[General] if and only if it is of none of the types above. \\ There is a unique smallest cotree of $\Delta$ which is not of the form indicating the fixed vertex, inversion, lineal, or focal type. It corresponds to the unique minimal subtree of $T$ on which $G$ acts geometrically dense. Moreover, $\Delta$ does not have any horocyclic ends.
\end{description}
\end{theorem}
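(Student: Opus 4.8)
The plan is to treat the six cases in the order in which Proposition~\ref{prop:group_types} resolves them (see Figure~\ref{fig:decision tree}), in each case reading the invariant structure defining the action type off a $G$-invariant scopo of $T$ and then transporting that scopo down to $\Delta$. The common mechanism is this: since $T$ is a tree, Definition~\ref{def:scopo_types} and the remark following it say that the scopos of $T$ are exactly the $O_{T'}$ for subtrees $T'$ and the $O_{\xi}$ for ends $\xi\in\partial T$. Hence $G$ fixes a vertex (resp.\ preserves an edge, resp.\ fixes an end) if and only if the associated $O_{T'}$ or $O_{\xi}$ is $G$-invariant, which by Lemma~\ref{lem:scopo_preimage} happens if and only if it is the $\pi$-preimage of a scopo $O'$ of $\Delta$; Theorem~\ref{thm:attractor} then identifies $O'$ as one of the three types, and Proposition~\ref{prop:scopo_structure_correspondence} converts it back: a type-(a) scopo on a cotree $\Gamma'$ of $\Delta$ corresponds to the invariant subtree $\pi^{-1}(\Gamma')$, while a type-(b) (resp.\ type-(c)) scopo corresponds to a $G$-invariant end of $T$ towards which $G$ does (resp.\ does not) translate. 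The key quantitative point is that the attractor of $O_{T'}$ or $O_{\xi}$ in $T$ determines the attractor of its image scopo $O'$ in $\Delta$, which is what forces $\Gamma'$ into the claimed shape.

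For the \emph{fixed vertex} case: $G$ fixes $v\in VT$ iff $\{v\}=\pi^{-1}(\Gamma')$ for a type-(a) scopo of $\Delta$, and $\pi^{-1}(\Gamma')$ is a single vertex precisely when $\Gamma'$ is a single vertex carrying no arc; that in turn forces $\Gamma$ to be a tree, since the complement of a single loopless vertex is $\Gamma$ itself, which must be acyclic. The promised bijection between fixed vertices of $T$ and single vertex cotrees of $\Delta$ drops out of this description. The \emph{inversion} case is the same analysis one edge up: a $G$-invariant edge is a $G$-invariant subtree $T'$ with two vertices and one edge, so $T'=\pi^{-1}(\Gamma')$ where $\Gamma'$ is either two vertices joined by a singleton-colour edge, or a single vertex with a loop $a$ and $|X_a|=1$. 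In the first situation $\pi$ keeps the two endpoints of $T'$ apart, so $G$ cannot invert and we are back in the fixed vertex case; in the second, the loop must be non-orientable (an orientable loop would pull back to a bi-infinite line, not an edge), and one builds the inversion from the colour-preserving flip of the two half-trees, whose existence comes from the uniqueness clause of Lemma~\ref{lem:delta_tree_construction}. That $G$ then fixes no vertex, hence is genuinely of inversion type, is standard tree combinatorics. The decision-tree order is forced here: a non-orientable loop prevents $\Gamma$ from being a tree, so the inversion case cannot simultaneously present a single vertex cotree.

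For \emph{lineal}, \emph{focal} and \emph{horocyclic} the invariant structures are ends, so I would start from a $G$-fixed end $\xi$, whose scopo $O_{\xi}$ is $G$-invariant and hence the $\pi$-preimage of a scopo of $\Delta$; this image scopo has an end attractor, so by Theorem~\ref{thm:attractor} it is of type (b) or (c), and Proposition~\ref{prop:scopo_structure_correspondence} decides between them according to whether $G$ translates towards $\xi$. In the \emph{lineal} case there are two fixed ends, the line between them projects onto a cycle $C$, the two associated scopos are $O_{\Gamma'}^{+}$ and $O_{\Gamma'}^{-}$ for the single cyclic cotree $\Gamma'$ with $V\Gamma'=VC$, and their union is all of $A\Gamma'$, forcing $|X_a|=1$ there; conversely, a cyclic cotree with all colour sets singletons makes both $O_{\Gamma'}^{\pm}$ scopos of $\Delta$ and so produces two fixed ends with translations. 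The \emph{focal} case is the one where only one cyclic orientation of $\Gamma'$ carries singleton colour sets---otherwise the reverse scopo would exist and yield a second fixed end---and some arc of the other orientation has size at least two; conversely such a configuration produces exactly one fixed end with a translation towards it. In the \emph{horocyclic} case $G$ has no translation at all, so the image scopo is of type (c), which by Definition~\ref{def:scopo_types}(c) forces $\Gamma$ to be a tree and the corresponding end to be a horocyclic end of $\Delta$; uniqueness of the fixed end matches uniqueness of the horocyclic end. Throughout, the decision-tree order discards degenerate overlaps: for instance, if $\Gamma$ is a ray with singleton colour sets then $G$ is trivial and of fixed vertex type, although the end of $\Gamma$ is horocyclic.

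Finally, if $G$ is of none of these types, Proposition~\ref{prop:group_types} forces the \emph{general} type, and I would invoke the classical fact that a general-type action has a unique minimal invariant subtree $T_0$, on which it acts geometrically densely. Being $G$-invariant, $T_0=\pi^{-1}(\Gamma_0)$ for a cotree $\Gamma_0$ of $\Delta$; minimality of $T_0$ makes $\Gamma_0$ the smallest cotree of $\Delta$, and since $G$ fixes no vertex, no edge and no end, $\Gamma_0$ is of none of the four special forms and $\Delta$ has no horocyclic end (such an end would, by the type-(c) argument above, produce a $G$-fixed end). I expect the main obstacle to be the bookkeeping around the exact shape of each cotree and of its $\pi$-preimage---distinguishing the single-vertex, two-vertex-edge and non-orientable-loop cotrees, and separating the lineal from the focal configuration---together with verifying that the special-form conditions are mutually exclusive, so that the ``if and only if'' statements may be read in the order of the decision tree, and constructing the edge inversion in the inversion case in a way consistent with the already-settled fixed vertex case.
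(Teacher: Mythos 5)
Your proposal is correct and follows essentially the same route as the paper: both arguments translate the invariant vertex/edge/end defining each action type into a $G$-invariant scopo of $T$, pull it down to a scopo of $\Delta$ via Lemma~\ref{lem:scopo_preimage}, classify it with Theorem~\ref{thm:attractor}, and read off the shape of the cotree or end using Proposition~\ref{prop:scopo_structure_correspondence}. The only notable divergence is in the (\emph{General}) case, where you obtain uniqueness of the smallest cotree from the classical uniqueness of the minimal invariant subtree, whereas the paper proves it by a direct combinatorial argument on $\Delta$ (two distinct smallest cotrees either intersect, yielding a smaller one, or are disjoint, forcing $\Gamma$ to be a tree with a single-vertex cotree); your explicit caveat about reading the biconditionals in decision-tree order to avoid degenerate overlaps is a sound observation that the paper leaves implicit.
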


\begin{proof}
Suppose that $G$ is of (\emph{Fixed Vertex}) type and let $v\in VT$ be fixed by $G$. If $\Gamma$ is not a tree, then it contains a cycle and hence there is a coloured path in $\Gamma$, starting and ending at $\pi(v)$, which is not the concatenation of a coloured path with a reverse. Thus, by construction of $T$, there is a vertex $w$ in $T$ that is distinct from $v$ but satisfies $\pi(w)=\pi(v)$, contradicting $Gv=\{v\}$. Hence $\Gamma$ is a tree. By Lemma~\ref{lem:scopo_preimage}, the type (a) scopo of $T$ associated to $v$ is the preimage of a scopo of $\Delta$ that comes from a single vertex cotree of $\Delta$ by Proposition~\ref{prop:scopo_structure_correspondence}.

Conversely, suppose that $\Gamma$ is a tree and $\Delta$ contains a single vertex cotree $v$. Applying Proposition~\ref{prop:scopo_structure_correspondence} to the type (a) scopo associated to $v$ we see that $\pi^{-1}(v)$ is a $G$-invariant subtree of $T$. By construction of $T$, the fact that $\Gamma$ is a tree and that $v$ is a cotree implies that $\pi^{-1}(v)$ consists of a single vertex. Therefore, $G$ is of (\emph{Fixed Vertex}) type.

Now suppose that $G$ is of (\emph{Inversion}) type and let $\{a,\overline{a}\}\subseteq AT$ be the inverted edge. By Lemma~\ref{lem:scopo_preimage}, the type (a) scopo of $T$ associated to $\{o(a),t(a)\}$ is the preimage of a scopo of $\Delta$ that comes from a cotree consisting of a vertex with a non-orientable loop labelled by a set of size $1$ due to Proposition~\ref{prop:scopo_structure_correspondence} and the assumption on $G$.

Conversely, suppose $\Gamma'$ is a cotree of $\Gamma$ consisting of a single vertex with a non-orientable loop labelled by a set of size $1$. The scopo associated to $\Gamma'$ is of type (a) and hence $\pi^{-1}(\Gamma')$ is a $G$-invariant subtree of $T$ by Proposition~\ref{prop:scopo_structure_correspondence}. Given that $\Gamma'$ is a cotree, the construction of $T$ implies that $\pi^{-1}(\Gamma')$ consists of two vertices connected by an edge. As $G$ contains an inversion of said edge with trivial local actions we conclude that $G$ is of (\emph{Inversion}) type.

Next, suppose that $G$ is of (\emph{Lineal}) type. The two fixed ends correspond to scopos of type (b) of $\Delta$ by Lemma~\ref{lem:scopo_preimage} and Proposition~\ref{prop:scopo_structure_correspondence}. As $G$ contains a translation between the two fixed ends, the cyclic orientations of the underlying cotrees of said scopos are precisely inverse to each other. The assumption on $G$ implies that every arc in this cycle is labelled by a set of size $1$.

Conversely, suppose that $\Gamma'$ is a cyclic cotree of $\Delta$ and that $|X_{a}|=1$ for all $a\in A\Gamma'$. The two scopos associated to $\Gamma'$ are of type (b) and hence $G$ has two fixed ends by Proposition~\ref{prop:scopo_structure_correspondence}. By definition, $G$ contains a translation between these two ends with trivial local actions. Hence $G$ is of (\emph{Lineal}) type.

Suppose that $G$ is of (\emph{Focal}) type and let $\xi\in\partial T$ be the end fixed by $G$. By Lemma~\ref{lem:scopo_preimage}, the type (c) scopo associated to $\xi$ is the preimage of a scopo of $\Delta$ which is of type (b) by Proposition~\ref{prop:scopo_structure_correspondence} and hence comes from a cyclic cotree $\Gamma'$ of $\Delta$. As $G$ fixes $\xi$, we must have $|X_{a}|=1$ for all $a\in A\Gamma'$ that belong to the cyclic orientation $O$ of $\Gamma'$ induced by $\xi$. Furthermore, as $G$ fixes no other end, there can only be one scopo of type (b) in $\Delta$ and hence, in particular, at least one arc in $A\Gamma'\backslash O$ is labelled by a set of size at least $2$.

Conversely, suppose that $\Gamma'$ is a cyclic cotree of $\Delta$ with the prescribed properties. Then the scopo of $\Delta$ associated to $\Gamma'$ is the unique scopo of type (b) in $\Delta$ hence $G$ is of (\emph{Focal}) type by Proposition~\ref{prop:scopo_structure_correspondence}.

Suppose that $G$ is of (\emph{Horocyclic}) type and let $\xi\in\partial T$ be the unique end fixed by $G$. By Lemma~\ref{lem:scopo_preimage}, the type (c) scopo associated to $\xi$ is the preimage of a scopo $O$ of $\Delta$ which is also of type (c) by Proposition~\ref{prop:scopo_structure_correspondence}. Hence $\Gamma$ is a tree and $O$ comes from the unique horocyclic end of $\Delta$.

Conversely, suppose that $\Gamma$ is a tree and $\Delta$ has a unique horocyclic end $\xi$. Then the scopo associated to $\xi$ is of type (c) by definition and hence $G$ is of (\emph{Horocyclic}) type by Proposition~\ref{prop:scopo_structure_correspondence}.

Finally, suppose that $G$ is of (\emph{General}) type. Then by definition, $G$ is not of any of the other types. To see that there is a unique smallest cotree suppose for a contradiction that there are two distinct smallest cotrees $\Gamma_{1}$ and $\Gamma_{2}$ of $\Gamma$. If $\Gamma_{1}\cap\Gamma_{2}$ is non-empty then the subgraph of $\Gamma$ induced by $V\Gamma_{1}\cap V\Gamma_{2}$ is a smaller cotree of $\Gamma$ contained in both $\Gamma_{1}$ and $\Gamma_{2}$. If $\Gamma_{1}$ and $\Gamma_{2}$ are disjoint, then $\Gamma$ is a tree as neither cotree can contain a cycle. Consider the vertex $v\in V\Gamma_{1}$ that is closest to $\Gamma_{2}$ and the arc $a\in A\Gamma$ with $o(a)=v$ oriented towards $\Gamma_{2}$. As both $\Gamma_{1}$ and $\Gamma_{2}$ are cotrees, and every projecting path from a vertex in the half-tree $T_{\overline{a}}$ to $\Gamma_{2}$ passes through $v$, we see that $\{v\}$ is a cotree, contradicting the type of $G$.
\end{proof}

\section{Discrete (P)-closed groups acting on trees}

In this section we characterise discreteness of $(P)$-closed groups in terms of their local action diagrams. We first show that this is also characterises discreteness for groups that satisfy Tits' original independence property ($P$) and, in fact, the weaker property ($IP_{1}$) of Banks--Elder--Willis \cite[Definition~5.1]{BEW15}.

Given a tree $T$, recall that a group $G\le\Aut(T)$ has Property ($IP_{1}$) if for any arc $a\in AT$ we have $G_{a}=G_{a,T_{a}}\cdot G_{a,T_{\overline{a}}}$, where $T_{a}$ and $T_{\overline{a}}$ denote the half-trees of $T$ defined by $a$ containing $t(a)$ and $o(a)$ respectively.

\begin{proposition}
Let $T$ be a tree and let $G\le\Aut(T)$ satisfy Property~($IP_{1}$). Then $G$ is discrete if and only if $G^{(P)}$ is discrete.
\end{proposition}

\begin{proof}
If $G^{(P)}$ is discrete then $G^{(P)}_{F}$ is trivial for some finite set $F\subseteq VT$. Hence so is $\smash{G_{F}\le G^{(P)}_{F}}$ and thus $G$ is discrete as well. 

Conversely, suppose that $\smash{G^{(P)}}$ is non-discrete. We show that $G$ is non-discrete as well. Let $v\in VT$ and $n\in\mathbb{N}$. Consider the ball $B(v,n)$. By assumption, $\smash{G^{(P)}_{B(v,n)}}$ is non-trivial. Increasing $n$ if necessary, we may assume that there is an arc $a\in AT$ with $o(a)\in S(v,n)$ and $t(a)\in S(v,n\!-\! 1)$, such that there is an element $\smash{h\in G^{(P)}_{B(v,n)}}$ that acts non-trivially on some arc $b\in o^{-1}(o(a))$. Then, by definition of $\smash{G^{\scaleto{(P)}{6pt}}}$, there is an element $g\in G$ such that $g$ and $h$ agree on the arc $b$. As $G$ satisfies Property~($IP_{1}$), we may write $g=g_{a}\cdot g_{\overline{a}}$, where $g_{a}\in G_{a,T_{a}}$ and $g_{\overline{a}}\in G_{a,T_{\overline{a}}}$. Then the element $g_{a}\in G$ acts non-trivially on $b$ and fixes $B(v,n)\subseteq\{a\}\cup T_{a}$. Hence $G$ is non-discrete.
\end{proof}

Unlike the action type introduced in Section~\ref{sec:action_type}, however, discreteness is not a locally determined global property. For example, given a permutation group $F\le S_{d}$, the group $\mathrm{U}_{2}(\Gamma(F))\le\mathrm{Aut}(T_{d})$ introduced in \cite[Section 4.4.1]{Tor23} is $(P_{2})$-closed and discrete. However, passing to its $(P)$-closure $\mathrm{U}_{2}(\Gamma(F))^{(P_{1})}=\mathrm{U}_{1}(F)$ results in a discrete group if and only if $F$ is semiregular by \cite[Proposition 6.20]{GGT18}.

We now turn to characterising discreteness of ($P$)-closed groups.

\begin{theorem}\label{thm:discrete}
Let $\Delta = (\Gamma, (X_{a}), (G(v)))$ be a local action diagram, $\mathbf T = (T, \pi, \calL)$ be a $\Delta$-tree, and $G:=\mathbf U(\mathbf{T},(G(v)))\le\Aut_{\pi}(T)$. If $G$ is of type
\begin{description}[before={{\renewcommand\makelabel[1]{(\emph{##1})}}}]
	\item[Fixed vertex] then $G$ is discrete if and only if $G(v)$ is trivial for almost all $v\in V\Gamma$, and whenever $X_{v}$ ($v\in V\Gamma$) is infinite then $G(v)$ has a finite base and $G(u)$ is trivial for every $u\in V\Gamma$ such that the arc $a\in o^{-1}(v)$ oriented towards $u$ has an infinite colour set.
	\item[Inversion] then $G$ is discrete if and only if $G(v)$ is trivial for almost all $v\in V\Gamma$, and whenever $X_{v}$ ($v\in V\Gamma$) is infinite then $G(v)$ has a finite base and $G(u)$ is trivial for every $u\in V\Gamma$ such that the arc $a\in o^{-1}(v)$ oriented towards $u$ has an infinite colour set.
	\item[Lineal] then $G$ is discrete if and only if $G(v)$ is trivial for all $v\in V\Gamma$.
	\item[Focal] then $G$ is non-discrete.
	\item[Horocyclic] then $G$ is non-discrete.
	\item[General] then $G$ is discrete if and only if $G(v)$ is semiregular for all $v\in V\Gamma'$ and trivial otherwise, where $\Gamma'$ is the unique smallest cotree of $\Delta$.
\end{description}
\end{theorem}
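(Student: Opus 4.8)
We outline the strategy; throughout, $G$ acts on $VT$ with the permutation topology, so by Lemma~\ref{lem:perm_top_discrete} the group $G$ is discrete if and only if $G_{F}=1$ for some finite $F\subseteq VT$. Since an automorphism of a tree fixing two vertices fixes the geodesic joining them, we may take $F=VS$ for a finite subtree $S\subseteq T$. Thus the plan is to first characterise, for a finite subtree $S$, when $G_{VS}=1$, and then to feed this into the six cases using the structural description of $T$ supplied by Theorem~\ref{thm:lad_group_types}. For $u\in VT$ let $B_{u}(S):=\{\calL(c)\mid c\in o^{-1}(u),\ t(c)\in VS\}$ and, for $u\notin VS$, let $c_{u}\in o^{-1}(u)$ be the arc pointing towards $S$. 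The key criterion is: $G_{VS}=1$ if and only if (i) $B_{u}(S)$ is a base of $G(\pi(u))$ for every $u\in VS$, and (ii) $G(\pi(u))_{\calL(c_{u})}=1$ for every $u\in VT\setminus VS$.

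I would prove this criterion using Lemma~\ref{lem:universal_extension}. For the non-trivial implication of ``$\Leftarrow$'': if $1\neq g\in G_{VS}$ then its fixed-vertex set $Z(g)$ is a proper subtree of $T$ containing $VS$ (properness because $T$ is simple, so $g\neq\id$ moves a vertex), hence has a boundary vertex $u$; since $g$ fixes $u$ and every $S$-neighbour of $u$, the local action $\sigma_{\calL,u}(g)$ is non-trivial yet fixes $B_{u}(S)$ if $u\in VS$ and fixes $\calL(c_{u})$ if $u\notin VS$, contradicting (i) or (ii). For ``$\Rightarrow$'' (contrapositive): if (i) or (ii) fails at some $u$, choose a non-trivial $\sigma$ in the relevant point stabiliser; the construction underlying Lemma~\ref{lem:universal_extension} yields $1\neq g\in G_{u}$ with $\sigma_{\calL,u}(g)=\sigma$ fixing $T_{c}$ pointwise for every $c\in o^{-1}(u)$ with $\sigma(\calL(c))=\calL(c)$, and such a $g$ fixes $VS$ pointwise.

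Next I would dispatch the four cases in which $\Gamma$ carries a cycle or a distinguished end. For the \emph{General} type, where $\Gamma'$ contains a cycle: if $G(v)$ is semiregular on $V\Gamma'$ and trivial elsewhere, then both (i) and (ii) hold for $S$ a single edge, so $G$ is discrete; conversely, given a witnessing finite $S$, the cycle supplies arbitrarily long non‑backtracking walks in $\Gamma$, so every pair $(w,b)$ with $b\in o^{-1}(w)$ — including arcs into the hanging forest, reached by first walking ``up'' the corresponding hanging subtree of $T$ — arises as $(\pi(u),\pi(c_{u}))$ for $u\notin VS$, and (ii) forces $G(w)$ semiregular on each $X_{b}$, hence (since projecting‑path arcs have singleton colour sets) $G(w)=1$ for $w\notin V\Gamma'$. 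The \emph{Lineal} case is the same argument run along the axis $L$ of $T$ and its hanging subtrees, using that all arcs of the cyclic cotree carry singleton colour sets; one gets $G$ discrete iff all $G(v)$ are trivial. For the \emph{Focal} and \emph{Horocyclic} types, $\Delta$ has no single‑vertex cotree, so every vertex of $\Gamma$ is the head of an arc with colour set of size $\geq2$; combining this with the fixed end one shows that for \emph{any} finite $S$ there is $u\notin VS$ arbitrarily far from $S$ with $\pi(c_{u})$ of singleton colour set and $G(\pi(u))\neq1$ — in the Focal case running $u$ through the copies on $L$ of $o(a')$ on the side away from the fixed end (using the translation from Proposition~\ref{prop:scopo_structure_correspondence}), in the Horocyclic case through copies, arbitrarily far towards the fixed end, of vertices with non‑trivial local action — so (ii) fails and $G$ is never discrete.

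Finally, the \emph{Fixed vertex} case (and \emph{Inversion}, via the index‑two subgroup fixing an endpoint of the inverted edge), which I expect to be the main obstacle. Here $\Gamma$ is a tree with single‑vertex cotree $v^{*}$, and because every arc towards $v^{*}$ carries a singleton colour set, no walk from $v^{*}$ in $T$ can backtrack, so $\pi$ restricts to a \emph{depth‑preserving} map $(T,v_{0})\to(\Gamma,v^{*})$ with $v_{0}=\pi^{-1}(v^{*})$: a vertex $w$ at distance $d$ from $v^{*}$ reappears in $T$ exactly at depth $d$, with multiplicity $\prod|X_{a}|$ over the arcs $a$ on its geodesic pointing away from $v^{*}$, and each occurrence $u$ has $\calL(c_{u})$ the unique colour of the singleton arc at $w$ towards $v^{*}$, so $G(\pi(u))_{\calL(c_{u})}=G(w)$. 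Hence condition (ii) becomes ``$G(w)=1$ for every $w$ all of whose $T$‑preimages lie outside $S$'', which a finite $S$ can enforce exactly when only finitely many $G(w)$ are non‑trivial and each such $w$ has finitely many $T$‑preimages — equivalently the geodesic $[v^{*},w]$ has no away‑arc with infinite colour set; meanwhile condition (i) at a vertex of $T$ of infinite degree (i.e.\ with $X_{\pi(u)}$ infinite) forces $G(\pi(u))$ to admit a finite base. The remaining work is to verify that these requirements can be met simultaneously by a single finite $S$ precisely under the three‑part condition in the statement: one builds $S$ from the (finitely many, finitely‑branching) geodesics to the $T$‑preimages of the non‑trivial $G(w)$, enlarged at each such $T$‑vertex by a finite base of its local action. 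Reconciling this construction with the stated condition — and tracking where non‑local‑finiteness of $T$ is exactly what makes the ``finite base'' and ``$G(u)$ trivial beyond an infinite colour set'' clauses necessary — is the technical heart of the argument; the other five cases are comparatively short given the criterion above and Theorem~\ref{thm:lad_group_types}.
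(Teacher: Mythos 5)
Your opening reduction --- that $G_{VS}=1$ for a finite subtree $S$ if and only if (i) each $B_{u}(S)$ is a base of $G(\pi(u))$ for $u\in VS$ and (ii) $G(\pi(u))_{\calL(c_{u})}=1$ for $u\notin VS$ --- is correct and is a clean uniform packaging of what the paper does case by case with Lemma~\ref{lem:perm_top_discrete} and Lemma~\ref{lem:universal_extension}. The substantive problem is in the (\emph{General}) converse, where you assert that the smallest cotree $\Gamma'$ contains a cycle and that this cycle ``supplies arbitrarily long non-backtracking walks in $\Gamma$''. A (\emph{General})-type diagram need not contain any cycle: take $\Gamma$ to be a single edge on two vertices with all colour sets of size $3$ and both local actions equal to $S_{3}$. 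Then $\Gamma'=\Gamma$ is a tree, the longest non-backtracking walk in $\Gamma$ has length $1$, the local actions are not semiregular, and the group is indeed non-discrete --- but your mechanism for producing vertices $u$ arbitrarily far from $S$ with prescribed $(\pi(u),\pi(c_{u}))$ produces nothing. The correct mechanism, which is the content of the paper's Lemma~\ref{lem:general_type_discrete}, is that arbitrarily long \emph{coloured} paths exist because a path may backtrack in $\Gamma$ while lifting to a non-backtracking path in $T$ whenever the reverse of some arc on it has a colour set of size at least $2$; and if no such arc is available, one contradicts the \emph{minimality} of $\Gamma'$ by deleting the terminal vertex or a component at a cut vertex. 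That minimality argument is the crux of the General converse and is absent from your outline.

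Two further points. In the (\emph{Horocyclic}) case, a preimage $u$ lying on the ray towards the fixed end has $c_{u}$ oriented \emph{away} from $\xi$, so $\pi(c_{u})$ need not have a singleton colour set and $G(\pi(u))_{\calL(c_{u})}$ may well be trivial even though $G(\pi(u))$ is not; you need the paper's out-and-back construction (travel beyond $S$ towards $\xi$, then branch off along a second colour of an arc of the ray oriented away from $\xi$ with $|X_{a}|\ge 2$ --- whose existence at infinitely many ray vertices must itself be deduced from the absence of a single-vertex cotree) so that $c_{u}$ points towards $\xi$ and hence carries a singleton colour set fixed by all of $G(\pi(u))$. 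Finally, the (\emph{Fixed vertex})/(\emph{Inversion}) case is set up correctly (your depth-preserving description of $\pi$ and the translation of (i)--(ii) into the three-part condition match the paper's), but it is left as a programme: the paper's actual proof is an explicit radius-by-radius construction of a finite set $\widetilde{F}$, inserting finite bases and truncating at infinite colour sets, plus three separate non-discreteness arguments when the condition fails, and that is where most of the remaining work lies.
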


For clarity, the proof of Theorem~\ref{thm:discrete} is split over the following subsections, one for each of the six possible types.

\subsection{The (\emph{Fixed vertex}) Type}

Suppose that $G$ is of (\emph{Fixed Vertex}) type, fixing $\tilde{v}\in VT$. Then by Theorem~\ref{thm:lad_group_types}, the graph $\Gamma$ is a tree and there is single vertex cotree $v\in V\Gamma$ such that $\pi(\tilde{v})=v$. Without loss of generality, assume that the $\Delta$-tree $\mathbf{T}$ was constructed using the base vertex $\tilde{v}\in VT$, as described following Lemma~\ref{lem:delta_tree_construction}.

We first show that $G$ is discrete if the listed conditions are satisfied. By Lemma~\ref{lem:perm_top_discrete}, it suffices to construct a finite set $\smash{\widetilde{F}\subseteq VT}$ so that $\smash{G_{\widetilde{F}}}$ is trivial.

Since $G(w)$ is trivial for almost all $w\in V\Gamma$, there exists $n\in\mathbb{N}$ such that $G(w)$ is trivial for all $w\in V\Gamma$ with $d(w,v)>n$. Iterating over the radius $r\in\{0,\ldots,n\}$ we construct finite vertex sets $V_{r}\subseteq S(v,r)$, finite colour subsets $F_{u}\subseteq X_{u}$ for every $u\in V_{r}$, and finite arc sets $\smash{A_{r}\subseteq\bigcup_{u\in V_{r}}o^{-1}(u)}$. The union of the colour sets $F_{u}$ will give rise to the set $\smash{\widetilde{F}}$. To begin, put $V_{0}:=\{v\}$. Given $a\in A\Gamma$, recall that $\Gamma_{a}$ denotes the half-tree of $\Gamma$ containing $t(a)$. We define $F_{v}\subseteq X_{v}$ to be the union of the set $\{c\in X_{v}\mid \exists u\in V\Gamma_{p(c)}:\ G(u)\text{ is non-trivial}\}$ and a finite base of $G(v)$. Then $F_{v}$ is finite by the assumptions on $\Delta$. Finally, set $A_{0}:=\{a\in o^{-1}(v)\mid F_{v}\cap X_{a}\neq\varnothing\}$.

Now assume that the vertex set $V_{r}$, the colour subsets $\{F_{u}\subseteq X_{u}\mid u\in V_{r}\}$ and the arc sets $A_{r}$ have been defined. We let $V_{r+1}:=\{t(a)\mid a\in A_{r}\}\subseteq S(v,r+1)$. For every $u\in V_{r}$ and $a\in A_{r}\cap o^{-1}(u)$ we make a distinction: if $X_{a}$ is infinite we define $F_{t(a)}:=\varnothing$ and $A_{r+1,a}:=\varnothing$. Otherwise, let $F_{t(a)}\subseteq X_{t(a)}$ be the union of the set $\{c\in X_{t(a)}\mid \exists u\in V\Gamma_{p(c)}:\ G(u) \text{ is non-trivial}\}$ and a finite base of $G(t(a))$ not containing the single colour in $X_{\overline{a}}$. Let $A_{r+1,a}:=\{b\in o^{-1}(t(a))\mid F_{t(a)}\cap X_{b}\neq\varnothing\}$. Finally, set $A_{r+1}:=\bigcup_{a\in A_{r}}A_{r+1,a}$.

Now, consider the set $\smash{F:=\bigcup_{r=0}^{n}\bigcup_{u\in V_{r}}F_{u}\subseteq\bigcup_{v\in V\Gamma}X_{v}}$ and note that it is finite by construction. Let $\smash{\widetilde{F}}\subseteq VT$ be the set of vertices corresponding to coloured paths originating at $v$ containing only colours from the set $F$. Then $\smash{\widetilde{F}}$ is finite as well. By construction, the stabiliser $\smash{G_{\widetilde{F}}}$ induces a trivial local action at every vertex in~$\smash{\widetilde{F}}$. Furthermore, every vertex outside of $\smash{\widetilde{F}}$ falls into one of three cases: either it has distance at least $n+1$ from $\tilde{v}$, it involves a colour not contained in $F$, or it follows a vertex $\tilde{w}\in VT$ such that $G(\pi(\tilde{w}))$ has an infinite orbit in the direction of $\pi(\tilde{v})$. In each case, the stabiliser $\smash{G_{\widetilde{F}}}$ has trivial local action at the respective vertex as well. Overall, $\smash{G_{\widetilde{F}}}$ is trivial.

We now show that if the conditions of the (\emph{Fixed Vertex}) case of Theorem~\ref{thm:discrete} are not satisfied, then $G$ is not discrete.

First, suppose that there are infinitely many vertices $u\in V\Gamma$ such that $G(u)$ is non-trivial. Let $\smash{\widetilde{F}}\subseteq VT$ be any finite subset of vertices. We show that $\smash{G_{\widetilde{F}}}$ is non-trivial. Then $G$ is non-discrete by Lemma~\ref{lem:perm_top_discrete}. Passing to a larger set if necessary, we may assume that $\smash{\widetilde{F}}$ contains the fixed vertex $\smash{\tilde{v}}$. Then the set $\smash{F:=\pi(\widetilde{F})\subseteq V\Gamma}$ is finite as well and hence there exists $w\in V\Gamma\backslash F$ such that $G(w)$ is non-trivial. Let $\calC\in VT$ correspond to a coloured path in $\Delta$ from $v$ to $w$. Since $\{v\}$ is a cotree of $\Delta$ there exists an automorphism $g\in G$ that fixes $\smash{\widetilde{F}}$ and has a non-trivial local action at $\calC\in VT$ by Lemma~\ref{lem:universal_extension}. Hence $G_{\widetilde{F}}$ is non-trivial and $G$ is non-discrete.

Now suppose that there is a vertex $w\in V\Gamma$ such that $X_{w}$ is infinite and $G(w)$ has no finite base. Pick $\tilde{w}\in\pi^{-1}(w)$. Fixing a finite set of vertices $F\subseteq VT$ as well as $\tilde{w}$ forces at most finitely many of the vertices $\{t(a)\mid a\in o^{-1}(\tilde{w})\}$ to be fixed. Hence the local action of $G_{F\cup\{\tilde{w}\}}$ at $\tilde{w}$ is non-trivial as before and $G$ is non-discrete.

Finally, suppose that $X_{u}$ is infinite for some $u\in V\Gamma$ and that there exists $w\in V\Gamma$ such that $G(w)$ is non-trivial and the arc $a\in o^{-1}(u)$ oriented towards $w$ has an infinite colour set. Then the set $\pi^{-1}(w)$ is infinite as well. Since $G(w)$ is non-trivial we may therefore, given any finite set $F$ of vertices of $T$, define a non-trivial automorphism of $T$ fixing $F$ as before.

\subsection{The (\emph{Inversion}) Type}

Suppose $G$ is of (\emph{Inversion}) type, inverting the edge $\{a,\overline{a}\}$ of $T$. Put $\tilde{v}:=o(a)$. Then by Theorem~\ref{thm:lad_group_types}, the local action diagram $\Delta$ contains a unique cotree consisting of a vertex $v\in V\Gamma$, such that $\pi(\tilde{v})=v$ with a non-orientable loop labelled by a set of size $1$. Without loss of generality, assume that the $\Delta$-tree $\mathbf{T}$ was constructed using the base vertex $\tilde{v}\in VT$.

We first show that $G$ is discrete if the listed conditions are satisfied. By Lemma~\ref{lem:perm_top_discrete}, it suffices to construct a finite set $\smash{\widetilde{F}\subseteq VT}$ so that $\smash{G_{\widetilde{F}}}$ is trivial. This construction follows the (\emph{Fixed vertex}) case verbatim except that we need to include the colour of the non-orientable loop attached to $v$ at the first step of the iteration.

Since $G(w)$ is trivial for almost all $w\in V\Gamma$, there exists $n\in\mathbb{N}$ such that $G(w)$ is trivial for all $w\in V\Gamma$ with $d(w,v)>n$. Iterating over the radius $r\in\{0,\ldots,n\}$ we construct finite vertex sets $V_{r}\subseteq S(v,r)$, finite colour subsets $F_{u}\subseteq X_{u}$ for every $u\in V_{r}$ and finite arc sets $\smash{A_{r}\subseteq\bigcup_{u\in V_{r}}o^{-1}(u)}$. The union of the colour sets $F_{u}$ will give rise to the set $\smash{\widetilde{F}}$. To begin, put $V_{0}:=\{v\}$. Given $a\in A\Gamma$, recall that $\Gamma_{a}$ denotes the half-tree of $\Gamma$ containing $t(a)$. We define $F_{v}\subseteq X_{v}$ to be the union of the set $\{c\in X_{v}\mid \exists u\in V\Gamma_{p(c)}:\ G(u)\text{ is non-trivial}\}$ and a finite base of $G(v)$ that contains the colour of the non-orientable loop attached to $v$. Then $F_{v}$ is finite by the assumptions on $\Delta$. Finally, set $A_{0}:=\{a\in o^{-1}(v)\mid F_{v}\cap X_{a}\neq\varnothing\}$.

Now assume that the vertex set $V_{r}$, the colour subsets $\{F_{u}\subseteq X_{u}\mid u\in V_{r}\}$ and the arc sets $A_{r}$ have been defined. We let $V_{r+1}:=\{t(a)\mid a\in A_{r}\}\subseteq S(v,r+1)$. For every $u\in V_{r}$ and $a\in A_{r}\cap o^{-1}(u)$ we make a distinction: if $X_{a}$ is infinite we define $F_{t(a)}:=\varnothing$ and $A_{r+1,a}:=\varnothing$. Otherwise, let $F_{t(a)}\subseteq X_{t(a)}$ be the union of the set $\{c\in X_{t(a)}\mid \exists u\in V\Gamma_{p(c)}:\ G(u) \text{ is non-trivial}\}$ and a finite base of $G(t(a))$ not containing the single colour in $X_{\overline{a}}$. Let $A_{r+1,a}:=\{b\in o^{-1}(t(a))\mid F_{t(a)}\cap X_{b}\neq\varnothing\}$. Finally, set $A_{r+1}:=\bigcup_{a\in A_{r}}A_{r+1,a}$.

Now, consider the set $\smash{F:=\bigcup_{r=0}^{n}\bigcup_{u\in V_{r}}F_{u}\subseteq\bigcup_{v\in V\Gamma}X_{v}}$ and note that it is finite by construction. Let $\smash{\widetilde{F}}\subseteq VT$ be the set of vertices corresponding to coloured paths originating at $v$ containing only colours from the set $F$. Then $\smash{\widetilde{F}}$ is finite as well. By construction, the stabiliser $\smash{G_{\widetilde{F}}}$ induces a trivial local action at every vertex in~$\smash{\widetilde{F}}$. Furthermore, every vertex outside of $\smash{\widetilde{F}}$ falls into one of three cases: either it has distance at least $n+1$ from $\tilde{v}$, it involves a colour not contained in $F$, or it follows a vertex $\tilde{w}\in VT$ such that $G(\pi(\tilde{w}))$ has an infinite orbit in the direction of $\pi(\tilde{v})$. In each case, the stabiliser $\smash{G_{\widetilde{F}}}$ has trivial local action at the respective vertex as well. Overall, $\smash{G_{\widetilde{F}}}$ is trivial.

We now show that if the conditions of the (\emph{Inversion}) case of Theorem~\ref{thm:discrete} are not satisfied then $G$ is not discrete.

First, suppose that there are infinitely many vertices $u\in V\Gamma$ such that $G(u)$ is non-trivial. Let $\smash{\widetilde{F}\subseteq VT}$ be any finite set of vertices. We show that $G_{\widetilde{F}}$ is non-trivial. Then $G$ is non-discrete by Lemma~\ref{lem:perm_top_discrete}. Passing to a larger set if necessary, we may assume that $\tilde{F}$ contains the inverted edge $\{a,\overline{a}\}$. The set $\smash{F:=\pi(\widetilde{F})\subseteq V\Gamma}$ is finite as well and hence there exists $w\in V\Gamma\backslash F$ such that $G(w)$ is non-trivial. Let $\calC\in VT$ correspond to a coloured path in $\Delta$ from $o(a)$ to $w$. Since $\pi(o(a))$ belongs to the cotree of $\Delta$ we may define an automorphism $g\in G$ that fixes $\smash{\widetilde{F}}$ and has a non-trivial local action at $\calC\in VT$ by Lemma~\ref{lem:universal_extension}. Hence $G_{\widetilde{F}}$ is non-trivial and $G$ is non-discrete.

Now suppose that there is a vertex $w\in V\Gamma$ such that $X_{w}$ is infinite and $G(w)$ has no finite base. Pick $\smash{\tilde{w}}\in\pi^{-1}(w)$. Fixing a finite set of vertices $F\subseteq VT$ as well as $\smash{\tilde{w}}$ forces at most finitely many vertices $\{t(a)\mid a\in o^{-1}(\tilde{w})\}$ to be fixed. Hence the local action of $G_{F\cup\{\tilde{w}\}}$ at $\tilde{w}$ is non-trivial as before and $G$ is non-discrete.

Finally, suppose that $X_{u}$ is infinite for some $u\in V\Gamma$ and that there exists $w\in V\Gamma$ such that $G(w)$ is non-trivial and the arc $a\in o^{-1}(u)$ oriented towards $w$ has an infinite colour set. Then the set $\pi^{-1}(w)$ is infinite as well. Since $G(w)$ is non-trivial we may therefore, given any finite set $F$ of vertices of $T$, define a non-trivial automorphism of $T$ fixing $F$ as before.

\subsection{The (\emph{Lineal}) Type}

Suppose that $G$ is of (\emph{Lineal}) type. Then by Theorem~\ref{thm:lad_group_types} there is a cyclic cotree $\Gamma'$ in $\Delta$ such that $|X_{a}|=1$ for all $a\in A\Gamma'$.

We first show that $G$ is discrete if $G(v)$ is trivial for all $v\in V\Gamma$. Pick any $w\in VT$. Since every element of $G_{w}$ fixes $w$ and has trivial local actions by assumption we conclude that $G_{w}$ is trivial and hence $G$ is discrete by Lemma~{\ref{lem:perm_top_discrete}.
	
Conversely, suppose that $G(v)$ is non-trivial for some $v\in V\Gamma$. Since $\Gamma'$ is a cotree, $G(v)$ is in fact non-trivial on colours whose arcs are oriented away from $\Gamma'$. Moreover, since $\Gamma'$ is cyclic, for every finite set of vertices $F\subseteq VT$ there is $\tilde{v}\in\pi^{-1}(v)$ not contained in the subtree induced by $F$ and we may define an element of $G_{F}$ with non-trivial local action at $\tilde{v}$ by Lemma~\ref{lem:universal_extension}.

\subsection{The (\emph{Focal}) Type}

Suppose that $G$ is of (\emph{Focal}) type. Then by Theorem~\ref{thm:lad_group_types} the local action diagram $\Delta$ contains a cyclic cotree $\Gamma'$ with a cyclic orientation $O\subseteq A\Gamma'$ such that $|X_{a}|=1$ for all $a\in O$, representing the unique fixed end $\xi\in\partial T$, but $|X_{a}|>1$ for at least one $a\in A\Gamma'\backslash O$. In particular, there is $w\in V\Gamma'$ such that $G(w)$ is non-trivial, has an orbit of size greater than $1$ along $\overline{O}$ and an orbit of size $1$ along $O$.

Without loss of generality, suppose that the construction of $T$ began at $\tilde{v}\in VT$. Given any finite set $F\subseteq VT$, we construct a vertex $\calC\in VT$ such that $G$ admits a non-trivial local action on arcs originating at $\calC$ that are oriented away from $F$ and~$\xi$. There exists an element $g\in G_{F}$ with non-trivial local action at $\calC$ by Lemma~\ref{lem:universal_extension}. Let $n\in\mathbb{N}$ be such that the subtree of $T$ induced by $F$ is contained in $B_{n}(\tilde{v})$. Starting from the empty path, add colours to $\calC$ by following $\overline{O}$ until at least $n$ colours have been added and the last colour comes from the arc terminating at $w$. In $T$, the corresponding path ends with an arc oriented away from both $\xi$ and $\tilde{F}$ and such that $G(\pi(\calC))=G(w)$ admits a non-trivial local action on colours whose arc is oriented away from $\xi$. Hence we may define $g$ as described and so $G$ is non-discrete by Lemma~\ref{lem:perm_top_discrete}.

\subsection{The (\emph{Horocyclic}) Type}

Suppose that $G$ is of (\emph{Horocyclic}) type. Then by Theorem~\ref{thm:lad_group_types} the graph $\Gamma$ is a tree and $\Delta$ has a unique horocyclic end $\xi$. Let $R$ be any ray in $\Gamma$ representing $\xi$. Since $\xi$ is horocyclic and $G$ is not of (\emph{Fixed vertex}) type, there must be infinitely many arcs $a$ belonging to $R$, oriented away from $\xi$, such that $|X_{a}|>1$; otherwise, $R$ would contain a single vertex cotree and so $G$ would be of (\emph{Fixed vertex}) type. As a consequence, there are infinitely many vertices $w$ belonging to $R$ such that $G(w)$ is non-trivial.

Now, let $F\subseteq VT$ be any finite set of vertices. We construct a vertex $\calC\in VT$, such that $G$ admits a non-trivial local action on arcs originating at $\calC$ that are oriented away from $F$ and $\xi$. We may then define an element $g\in G_{F}$ with non-trivial local action at $\calC$ by Lemma~\ref{lem:universal_extension}. Suppose that the construction of $T$ started at $\tilde{v}\in VT$. Let $n\in\mathbb{N}$ be such that the subtree of $T$ induced by $F$ is contained in $B_{n}(\tilde{v})$. Let $\tilde{R}$ be the ray in $T$ that starts at $\tilde{v}$ and is oriented towards the fixed end. Then $R:=\pi(\tilde{R})$ is a ray representing $\xi$. By the previous paragraph there are vertices $w_{1}$ and $w_{2}$ in $R$ at distance $n_{1}$ and $n_{2}$ respectively from $v:=\pi(\tilde{v})$ such that $n<n_{1}<n_{2}$, and both $G(w_{1})$ and $G(w_{2})$ are non-trivial on arcs oriented away from $\xi$. Define the coloured path $\calC$ by adding colours to get from $v$ to $w_{2}$ and, using the fact that $|X_{a}|\ge 2$ for $a\in o^{-1}(w_{2})$ oriented towards $v$, extend the path with colours to terminate at $w_{1}$. In $T$, the corresponding path ends with an arc oriented away from both $\xi$ and $\tilde{F}$ and such that $G(\pi(\calC))=G(w_{1})$ admits a non-trivial local action on colours whose arcs are oriented away from $\xi$. Hence we may define $g$ as described and so $G$ is non-discrete by Lemma~\ref{lem:perm_top_discrete}.

\subsection{The (\emph{General}) Type}

Suppose that $G$ is of (\emph{General}) type. Then by Theorem~\ref{thm:lad_group_types} the local action diagram $\Delta$ contains a unique smallest cotree $\Gamma'$ that is not of the form indicating any of the other types. In particular, $\Gamma'$ corresponds to a scopo of type (a) and contains more than one vertex. From Proposition~\ref{prop:scopo_structure_correspondence} we conclude that $\pi^{-1}(\Gamma')$ is a $G$-invariant subtree of $T$ containing at least two vertices.

If the assumptions of the statement are satisfied then, in particular, every local action of $G$ is either semiregular or trivial. Given any arc $a\in AT$, we conclude that every $g\in G_{a}$ acts trivially on $B_{n}(o(a))\cup B_{n}(t(a))$ for every $n\in\bbN$ by induction. Hence $G_{a}$ is trivial and so $G$ is discrete by Lemma~\ref{lem:perm_top_discrete}.

Conversely, suppose that either there is a vertex $w\in V\Gamma'$ such that $G(w)$ is not semiregular, or a vertex $w\in V\Gamma\backslash V\Gamma'$ such that $G(w)$ is non-trivial. Given any finite set of vertices $F\subseteq VT$ we construct a non-trivial element $g\in G_{F}$. Our argument is based on the following lemma.

\begin{lemma}\label{lem:general_type_discrete}
Let $\Delta=(\Gamma,(X_{a}),(G(v)))$ be a local action diagram with associated group of (\emph{General}) type. Further, let $\Gamma'$ be the unique smallest cotree, and $n\in\mathbb{N}$. Then for any two vertices $v, w\in V\Gamma'$ and $a\in o^{-1}(w)\cap A\Gamma'$ there is a coloured path of length greater than $n$ representing a vertex of a $\Delta$-tree $\mathbf{T}=(T,\pi,\calL)$ constructed from $v$, consisting of colours from $\Gamma'$, and whose last colour belongs to $X_{\overline{a}}$.
\end{lemma}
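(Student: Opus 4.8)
The plan is to reformulate the sought coloured path as a geodesic in the subtree $T':=\pi^{-1}(\Gamma')$ of $T$ and to exploit that $G$ acts geometrically densely on $T'$. Write $\tilde v\in VT$ for the root of the $\Delta$-tree $\mathbf T$ constructed from $v$, so $\pi(\tilde v)=v$. Since $\Gamma'$ is an induced subgraph of $\Gamma$ and $T'$ is connected, a coloured path in $\Gamma$ consisting of colours of arcs in $A\Gamma'$ and representing a vertex $\tilde c\in VT$ is exactly the sequence of $\calL$-colours read along the geodesic $[\tilde v,\tilde c]$ of $T'$; hence it suffices to exhibit $\tilde c\in VT'$ with $d(\tilde v,\tilde c)>n$ whose incoming arc on $[\tilde v,\tilde c]$ has $\pi$-image $\overline a$, for then $\tilde c$ is a lift of $t(\overline a)=w$ and the last colour lies in $X_{\overline a}$. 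Because $G$ is of (\emph{General}) type, Theorem~\ref{thm:lad_group_types} and Proposition~\ref{prop:scopo_structure_correspondence} realise $T'$ as a $G$-invariant subtree on which $G$ acts geometrically densely; since $G$ fixes no vertex and no end of $T$, the tree $T'$ is infinite and leafless (were it a ray, $G$ would fix an end; otherwise deleting the leaves of $T'$ would yield a strictly smaller non-empty $G$-invariant subtree). Concretely, every lift of $u\in V\Gamma'$ has exactly $\delta(u):=\sum_{b\in o^{-1}(u)\cap A\Gamma'}|X_b|\ge 2$ neighbours in $T'$, so every geodesic of $T'$ issuing from $\tilde v$ can be prolonged indefinitely.

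For the construction, put $y:=t(a)$. First I go far from $\tilde v$: by (\emph{General}) type $G$ contains a hyperbolic element, whose axis $L$ lies in $T'$, and concatenating a geodesic from $\tilde v$ to a point $\tilde q\in L$ with a long stretch of $L$ away from $\tilde v$ gives a geodesic of $T'$ from $\tilde v$ of length exceeding $n+d_{\Gamma'}(\pi(L),y)$. From its endpoint $\tilde q'$ (a lift of some $q'\in\pi(L)$) I realise a geodesic of $\Gamma'$ from $q'$ to a lift of $y$ inside $T'$: tracing a non-backtracking walk of $\Gamma'$ produces a non-backtracking path of $T'$ because $\pi$ separates an arc from its reverse, and the junction at $\tilde q'$ is absorbed by a short detour permitted by $\delta(q')\ge 2$. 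This yields a geodesic $[\tilde v,\tilde y]$ of $T'$ with $d(\tilde v,\tilde y)>n$ and $\tilde y$ a lift of $y$. I then append one further arc at $\tilde y$ of $\pi$-image $\overline a$ pointing away from $\tilde v$, and take $\tilde c$ to be its terminus; reading the $\calL$-colours along $[\tilde v,\tilde c]$ produces the required coloured path.

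The main obstacle is the very last step: guaranteeing an $\overline a$-arc at $\tilde y$ pointing away from $\tilde v$. At most one arc of $\tilde y$ points towards $\tilde v$, so this can fail only when $|X_{\overline a}|=1$ and that single $\overline a$-arc is the one towards $\tilde v$; but if $|X_{\overline a}|=1$ then $\delta(y)\ge 2$ forces $o^{-1}(y)\cap A\Gamma'$ to contain a second arc $b\neq\overline a$, so I arrange in the previous step for the realised $\Gamma'$-geodesic to terminate at $y$ along an arc of $\pi$-image $\overline b\neq a$; then the unique $\overline a$-arc at $\tilde y$ is not the reverse of the incoming arc and hence points away from $\tilde v$ (and when $|X_{\overline a}|\ge 2$ no such care is needed). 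This is exactly where (\emph{General}) type is used: a size-one colour set $X_{\overline a}$ sitting at a pendant vertex would be the only obstruction, and leaflessness of $T'$ — forced by (\emph{General}) type — rules it out. A secondary technical point is attaining length $>n$ while still landing on a lift of $y$: when $\Gamma'$ is acyclic a geodesic ray of $T'$ need not revisit $y$, which is why the axis of a hyperbolic element of $G$ is invoked, as it supplies arbitrarily long geodesics of $T'$ from which a lift of $y$ is reachable in finitely many further non-backtracking steps.
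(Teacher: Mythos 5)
Your reformulation of the problem is sound: a coloured path from $v$ consisting of colours of $A\Gamma'$ is the same thing as a geodesic of $T'=\pi^{-1}(\Gamma')$ issuing from the root $\tilde v$, and the leaflessness of $T'$ (which you correctly extract from minimality of $\Gamma'$ and the absence of fixed vertices and ends) does guarantee that geodesics of $T'$ can be prolonged indefinitely. The gap is in the navigation step. Leaflessness only says that \emph{some} continuation exists at every vertex; it does not let you \emph{steer} a long geodesic so that it terminates at a lift of $y=t(a)$ with a prescribed final arc type. The place where this bites is exactly the point you wave at with ``the junction at $\tilde q'$ is absorbed by a short detour permitted by $\delta(q')\ge 2$''. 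If the first arc $f_{1}$ of your $\Gamma'$-geodesic from $q'$ to $y$ satisfies $f_{1}=\overline{\pi(\tilde e)}$ (where $\tilde e$ is the arc along which you arrived) and $|X_{f_{1}}|=1$, then the unique lift of $f_{1}$ at $\tilde q'$ is $\overline{\tilde e}$ and every continuation towards $y$ backtracks. A ``detour'' to another neighbour $\tilde r$ merely relocates the same obstruction to $\tilde r$: you again need a first arc towards $y$ whose colour set either has size at least $2$ or differs from the reverse of the arc you just used, and if all the relevant colour sets along the way are singletons this regress never terminates. Whether one can \emph{ever} turn around is a global condition on the colour sets of $A\Gamma'$, and ruling out the bad configuration is precisely the content of the lemma; it cannot be absorbed into a local degree count. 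The same unproved navigation claim is relied on again in your final step, where you ``arrange'' the incoming arc at $\tilde y$ to have type $\overline b\neq a$.

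The missing ingredient is a second, stronger use of the minimality of $\Gamma'$. The paper's proof runs the following dichotomy: fix any coloured path from $v$ to $w$ inside $\Gamma'$ ending in $X_{\overline a}$. If some arc on it has reverse colour set of size at least $2$, the path can be padded to arbitrary length by shuttling across that edge, and one is done. Otherwise all the relevant colour sets are singletons, and then either deleting $w$ (if it is not a cut vertex) or discarding the component of $\Gamma'\setminus\{w\}$ containing $v$ (if it is) produces a strictly smaller cotree of $\Delta$, contradicting that $\Gamma'$ is the unique smallest one. Your hyperbolic axis supplies length but not steerability; to repair the argument you would need to show, using minimality of $\Gamma'$ and not just leaflessness of $T'$, that from far out along the axis one can always reach a lift of $y$ along a non-backtracking path ending in the required arc type -- which is essentially the paper's cut-vertex argument in disguise.
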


\begin{proof}
Since $\Gamma'$ is connected as a cotree in the connected graph $\Gamma$, there is a coloured path from $v$ to $w$ within $\Gamma'$, for any $v,w\in V\Gamma'$, which represents a vertex of $T$. If the reverse of some arc in such a path is labelled by a set of size greater than $1$, we may construct an arbitrarily long such coloured path by repeating said arc and its reverse. Suppose therefore that for some $w\in V\Gamma'$ and $a\in o^{-1}(w)\cap A\Gamma'$ there is no such arc on any coloured path from $v$ to $w$ terminating in a colour from $X_{\overline{a}}$. Then, in particular, $|X_{a}|=1$ for all $a\in o^{-1}(w)$. If $w$ is not a cut vertex of $\Gamma'$ we therefore obtain a smaller cotree by removing $w$ from $\Gamma'$, contradicting the minimality of $\Gamma'$. If $w$ is a cut vertex of $\Gamma'$, let $\Gamma_{1}'$ and $\Gamma_{2}'$ be the two connected components obtained from $\Gamma'$ by removing $w$. Say $v\in V\Gamma_{1}'$. The nonexistence of a path from $v$ to $w$ with the desired properties implies that $|X_{a}|=1$ for all $a\in A\Gamma_{1}'$. Again, we obtain a smaller cotree contradicting the minimality of $\Gamma'$.
\end{proof}

\vspace{-0.02cm}
Now, let $F\subseteq VT$ be a finite set of vertices. Suppose that the construction of $T$ started at $\tilde{v}\in VT$ and let $n\in\bbN$ be such that the subtree of $T$ induced by $F$ is contained in $B_{n}(\tilde{v})$. As a consequence of Lemma~\ref{lem:general_type_discrete}, there is, for any vertex $w\in V\Gamma$, a coloured path connecting $v:=\pi(\tilde{v})$ to $w$ of length greater than $n$, terminating with any chosen incoming arc of $w$ and representing a vertex of $T$.

Suppose there is a vertex $w\in V\Gamma'$ such that $G(w)$ is not semiregular. In particular, there is a colour $c\in X_{w}$ such that $G(w)_{c}$ is non-trivial, say $c\in X_{a}$ for some $a\in o^{-1}(w)$. Using the above, construct a coloured path $\calC$ of length greater than $n$, terminating at $w$ with the reverse of $a$ and with final reverse color $c$. By Lemma~\ref{lem:universal_extension}, we may define an element $g\in G_{F}$ with non-trivial local action at $\calC$, fixing the arc labelled $c$ originating at $\calC$ that is oriented towards $F$. Hence $G$ is non-discrete.

Similarly, suppose there is a vertex $w\in V\Gamma$ such that $G(w)$ is not trivial. Since $\Gamma'$ is a cotree, every arc originating at $w$ which is oriented towards $\Gamma'$ is labelled by a set of size $1$, i.e. a fixed point of $G(w)$, and so there is an arc $a\in o^{-1}(w)$, oriented away from $\Gamma'$ such that $G(w)$ acts non-trivially on $X_{a}$. Proceed as above to construct a coloured path $\calC$ of length greater than $n$ terminating at $w$ with the reverse of an arc oriented towards $\Gamma'$. Then, as before, we may define an element $g\in G_{F}$ with non-trivial local action at $\calC$, fixing the arc originating at $\calC$ that is oriented towards $F$. Hence $G$ is non-discrete.

\bibliographystyle{amsalpha}
\bibliography{discrete}

@article{RS26,
  title={{Groups acting on trees with Tits' independence property (P): With an appendix by Stephan Tornier}},
  author={Reid, C. and Smith, S.},
  journal={Mathematische Annalen},
  volume={395},
  number={3},
  pages={53},
  year={2026}
}

@article{BEW15,
  title={Simple groups of automorphisms of trees determined by their actions on finite subtrees},
  author={Banks, C. and Elder, M. and Willis, G.},
  journal={Journal of Group Theory},
  volume={18},
  number={2},
  pages={235--261},
  year={2015}
}

@incollection{GGT18,
  author={Garrido, A. and Glasner, Y. and Tornier, S.},
  title={{Automorphism Groups of Trees: Generalities and Prescribed Local Actions}},
  editor={Caprace, P.-E. and Monod, M.},
  booktitle={New Directions in Locally Compact Groups},
  publisher={Cambridge University Press},
  year={2018},
  pages={92-116},
  chapter={6}
}

@incollection{Tit70,
  title={Sur le groupe des automorphismes d'un arbre},
  author={Tits, J.},
  booktitle={Essays on topology and related topics},
  pages={188--211},
  year={1970},
  publisher={Springer}
}

@article{BM00,
  title={Groups acting on trees: from local to global structure},
  author={Burger, M. and Mozes, S.},
  journal={Publications Math{\'e}matiques de l'IH{\'E}S},
  volume={92},
  number={1},
  pages={113--150},
  year={2000},
  publisher={Springer}
}

@article{Smi17,
  title={A product for permutation groups and topological groups},
  author={Smith, S.},
  journal={Duke Mathematical Journal},
  volume={166},
  number={15},
  pages={2965--2999},
  year={2017},
  publisher={Duke University Press}
}

@article{KM08,
  title={{Analogues of Cayley graphs for topological groups}},
  author={Kr{\"o}n, B. and M{\"o}ller, R.},
  journal={Mathematische Zeitschrift},
  volume={258},
  number={3},
  pages={637},
  year={2008},
  publisher={Springer}
}

@article{Tor23,
  title={Groups acting on trees with prescribed local action},
  author={Tornier, S.},
  journal={Journal of the Australian Mathematical Society},
  volume={115},
  number={2},
  pages={240--288},
  year={2023}
}

@article{Rad17,
  title={A classification theorem for boundary 2-transitive automorphism groups of trees},
  author={Radu, N.},
  journal={Inventiones mathematicae},
  volume={209},
  number={1},
  pages={1--60},
  year={2017},
  publisher={Springer}
}

@article{RS23,
  title={An introduction to the local-to-global behaviour of groups acting on trees and the theory of local action diagrams},
  author={Reid, C. and Smith, S.},
  journal={arXiv preprint 2309.05065},
  year={2023}
}

@article{Woe91,
  title={Topological groups and infinite graphs},
  author={Woess, W.},
  journal={{Discrete Mathematics}},
  volume={95},
  number={1-3},
  pages={373--384},
  year={1991},
  publisher={Elsevier}
}

\end{document}